\title[Categorical blow-up formula]{Categorical blow-up formula for Hilbert schemes of points}
\author{Naoki Koseki}
\date{}
\address{The University of Liverpool, Mathematical Sciences Building, Liverpool, L69 7ZL, UK.}
\email{koseki@liverpool.ac.uk}
\theoremstyle{plain}
\newtheorem{thm}{Theorem}[section]
\newtheorem{prop}[thm]{Proposition}
\newtheorem{def-prop}[thm]{Definition-Proposition}
\newtheorem{lem}[thm]{Lemma}
\newtheorem{cor}[thm]{Corollary}
\newtheorem*{thm*}{Theorem}
\theoremstyle{definition}
\newtheorem{defin}[thm]{Definition}
\newtheorem*{NaC}{Notation and Convention}
\newtheorem*{ACK}{Acknowledgement}
\theoremstyle{remark}
\newtheorem{rmk}[thm]{Remark}
\newtheorem{assump}[thm]{Assumption}
\DeclareMathOperator{\ch}{ch}
\DeclareMathOperator{\rk}{rk}
\DeclareMathOperator{\Spec}{Spec}
\DeclareMathOperator{\id}{id}
\newcommand{\bP}{\mathbb{P}}
\newcommand{\bC}{\mathbb{C}}
\newcommand{\bQ}{\mathbb{Q}}
\newcommand{\bZ}{\mathbb{Z}}
\newcommand{\mcC}{\mathcal{C}}
\newcommand{\mcE}{\mathcal{E}}
\newcommand{\mcF}{\mathcal{F}}
\newcommand{\mcG}{\mathcal{G}}
\newcommand{\mcH}{\mathcal{H}}
\newcommand{\mcK}{\mathcal{K}}
\newcommand{\mcO}{\mathcal{O}}
\newcommand{\mcV}{\mathcal{V}}
\DeclareMathOperator{\Hom}{Hom}
\DeclareMathOperator{\Tot}{Tot}
\DeclareMathOperator{\Coh}{Coh}
\DeclareMathOperator{\ext}{ext}
\DeclareMathOperator{\Ext}{Ext}
\DeclareMathOperator{\Hilb}{Hilb}
\DeclareMathOperator{\Sym}{Sym}
\DeclareMathOperator{\Quot}{Quot}
\DeclareMathOperator{\tr}{tr}
\newcommand{\hatS}{\hat{S}}
\begin{document}
\maketitle
\begin{abstract}
Let $S$ be a smooth projective surface, 
and $\hatS$ be its blow-up at a point. 
In this paper, we study the derived category of 
the Hilbert scheme of points on the blow-up $\hatS$. 
We obtain a semi-orthogonal decomposition consisting of 
the derived categories of the Hilbert schemes on the original surface $S$, 
which recovers the blow-up formula for the Euler characteristics obtained by 
G{\"o}ttsche and Nakajima--Yoshioka. 

The proof uses the Quot formula, 
which was conjectured by Jiang and recently proved by Toda. 
\end{abstract}

\setcounter{tocdepth}{1}
\tableofcontents

\section{Introduction}
\subsection{Main Results}
Hilbert schemes of points on a smooth projective surface $S$ have been investigated 
in various contexts, especially in the geometric representation theory (see, e.g., \cite{nak99}). 
A starting point of the development would be the following G{\"o}ttsche's formula (\cite{got90}):  
\begin{equation} \label{eq:got}
Z_S(q) \coloneqq \sum_{n\geq 0} e(\Hilb^n(S))q^n
=\left(\prod_{d=1}^\infty \frac{1}{1-q^d} \right)^{e(S)}. 
\end{equation}

The subject of this paper is to investigate 
the change of the invariants 
of the Hilbert schemes under the blow-up of a surface. 
Let $\hatS \to S$ be the blow-up of $S$ at a point. 
Then the formula (\ref{eq:got}) immediately implies 
\begin{equation} \label{eq:blformula}
Z_{\hatS}(q)=\left(\prod_{d=1}^\infty \frac{1}{1-q^d} \right) \cdot Z_S(q), 
\end{equation}
which we call {\it the blow-up formula}. 
Nakajima--Yoshioka \cite{ny11} gave a geometric understanding of 
the blow-up formula (\ref{eq:blformula}). 
They constructed a sequence 
\begin{equation} \label{eq:introny}
\Hilb^n(S) \cong M^0(1, 0, -n) \dashleftarrow M^1(1, 0, -n) \dashleftarrow 
\cdots \dashleftarrow M^m(1, 0, -n) \cong \Hilb^n(\hatS)
\end{equation}
of biratinal maps, where $M^k(1, 0, -n)$ ($k \in \bZ_{\geq 0}$)
is the moduli space of certain coherent sheaves on $\hatS$ 
with Chern character $(1, 0, -n)$, 
and $m \gg 0$. 
Furthermore, they obtained the formula for the Euler characteristics of these moduli spaces: 
\begin{equation}\label{eq:introMk}
\sum_{n \geq 0}e(M^k(1, 0, -n))q^n
=\left(\prod_{d=1}^k \frac{1}{1-q^d} \right) \cdot Z_S(q), 
\end{equation}
which recovers the blow-up formula (\ref{eq:blformula}) 
by taking the limit $k \to +\infty$. 

In this paper, we obtain the following categorification of the formulas 
(\ref{eq:blformula}) and (\ref{eq:introMk}) 
in terms of semi-orthogonal decompositions (SODs) of derived categories: 
\begin{thm}[Special case of Theorem \ref{thm:sodfinal}] 
\label{thm:introhilb}
The following statements hold: 
\begin{enumerate}
\item For each $k, n \in \bZ_{\geq 0}$, we have an SOD 
\[
D^b(M^k(1, 0, -n))=\left\langle
A_k(j)\mbox{-copies of } D^b(\Hilb^{n-j}(S)) \colon j=0, \cdots n
\right\rangle, 
\]
where $A_k(j)$ is defined by the formula 
\[
\prod_{d=1}^{k}\frac{1}{(1-q^d)}=\sum_j A_k(j)q^j. 
\]

\item For each $n \in \bZ_{\geq 0}$, we have an SOD 
\[
D^b(\Hilb^n(\hatS))=\left\langle
p(j)\mbox{-copies of } D^b(\Hilb^{n-j}(S)) \colon j=0, \cdots n
\right\rangle, 
\]
where $p(j)$ is the partition function. 
\end{enumerate}
\end{thm}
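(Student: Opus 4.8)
The plan is to reduce part (2) to part (1) and prove part (1) by a double induction. For the reduction: once $k \geq n$ the factors $(1-q^d)^{-1}$ with $d > n$ are irrelevant modulo $q^{n+1}$, so $A_k(j) = p(j)$ for all $0 \leq j \leq n$; and by Nakajima--Yoshioka the space $M^k(1,0,-n)$ in (\ref{eq:introny}) is isomorphic to $\Hilb^n(\hatS)$ for $k \gg 0$. Taking $k$ large enough for both, part (2) falls out of part (1). I would prove (1) by induction on $n$, and for fixed $n$ by induction on $k$; the base case $k = 0$ is $M^0(1,0,-n) \cong \Hilb^n(S)$ with $A_0(j) = \delta_{j,0}$. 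The combinatorial engine is the recursion
\[
A_k(j) = A_{k-1}(j) + A_k(j - k) = \sum_{l \geq 0} A_{k-1}(j - kl),
\]
read off from $\sum_j A_k(j) q^j = (1 - q^k)^{-1} \sum_j A_{k-1}(j) q^j$.

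For the inductive step I would analyse the wall-crossing in the chain (\ref{eq:introny}). Passing from $M^{k-1}(1,0,-n)$ to $M^k(1,0,-n)$ is a finite composition of elementary birational modifications (only finitely many are nontrivial for a fixed $n$), the $l$-th one being concentrated over a lower moduli space of the same kind, $M^{k-1}(1,0,-(n-kl))$ with $l \geq 1$, together with an elementary modification along the exceptional curve $C \cong \mathbb{P}^1$. The crux is to identify the locus supporting this modification, \'etale-locally over its base, with a relative Quot scheme of a sheaf supported on $C$ over $M^{k-1}(1,0,-(n-kl))$ --- fixing the Chern characters, the universal framing data, and the exact shape of the stability condition so that the Quot formula (conjectured by Jiang, proved by Toda) applies. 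Granting this, that formula expresses the derived category of the modification locus as a semiorthogonal sequence of copies of $D^b(M^{k-1}(1,0,-(n-kl)))$, which by the inductive hypothesis (valid since $n - kl < n$) is a semiorthogonal sequence of copies of $D^b(\Hilb^{n-kl-i}(S))$ with multiplicities $A_{k-1}(i)$. Splicing all of these --- the $l = 0$ piece being $D^b(M^{k-1}(1,0,-n))$ itself, with multiplicities $A_{k-1}(j)$ by the induction on $k$ --- into a single semiorthogonal decomposition of $D^b(M^k(1,0,-n))$ and adding up the multiplicities, the number of copies of $D^b(\Hilb^{n-j}(S))$ becomes $\sum_{l \geq 0} A_{k-1}(j - kl) = A_k(j)$, in agreement with the Euler characteristic identity (\ref{eq:introMk}). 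Conceptually, each level-$k$ modification adjoins one more part of size $k$ to the partition indexing the summand, so that after levels $1, \ldots, k$ the summands are indexed by partitions of $j$ into parts $\leq k$.

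The real work --- and the step I expect to be the main obstacle --- is the precise identification of the modification loci with relative Quot schemes of exactly the form required by Toda's theorem: this means translating Nakajima--Yoshioka's perverse-coherent wall-crossing into the language of Quot schemes of points, and it demands particular care when the length-$kl$ configuration on (or near) $C$ collides in the base with the point $p$ being blown up. That collision is what forces the base of the relative Quot scheme to be the full moduli space $M^{k-1}(1,0,-(n-kl))$ --- hence, after the induction, the full $\Hilb^{n-j}(S)$ rather than the Hilbert scheme of $S \setminus \{p\}$ --- and it is exactly where the content of the Quot formula, rather than a naive Grassmannian-bundle computation, is needed. Once this is in place, the remaining bookkeeping (compatibility of the semiorthogonal pieces under the successive modifications and along the induction, so that the mutations line up and no cross terms appear) is essentially formal, resting on the standard behaviour of semiorthogonal decompositions under standard flips --- Orlov's projective-bundle and blow-up formulas and their Quot-scheme generalization --- and on the transitivity of semiorthogonal decompositions.
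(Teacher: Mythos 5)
Your reduction of (2) to (1) is exactly the paper's: take $k\ge\max(n,m_0)$ so that $M^k(1,0,-n)\cong\Hilb^n(\hatS)$ by Theorem \ref{thm:NY} and $A_k(j)=p(j)$ for $j\le n$; and your recursion $A_k(j)=\sum_{l\ge 0}A_{k-1}(j-kl)$ is correct. But the step you yourself flag as the main obstacle --- identifying the wall-crossing with a Quot scheme of the precise form required by Theorem \ref{thm:quotformula} --- is left open, and the shape you guess for it is not the one that works. You propose to break $M^{k-1}(1,0,-n)\dashleftarrow M^k(1,0,-n)$ into elementary modifications, identify each modification locus \emph{\'etale-locally} over a base $M^{k-1}(1,0,-(n-kl))$ with a relative Quot scheme of a sheaf supported on $C$, and then ``splice'' the resulting semi-orthogonal sequences. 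Two problems. First, the Quot formula is not a statement about a stratification: it relates two \emph{global} Quot schemes $\Quot_{X,d}(\mcG)$ and $\Quot_{X,d-i}(\mcK)$ over a common smooth base $X$, so an \'etale-local identification of loci gives you nothing to feed into it, and assembling SODs of strata into an SOD of the ambient space is not a formal operation --- that assembly is precisely the content, not ``essentially formal bookkeeping'' covered by Orlov's formulas and transitivity. Second, your guessed bases are moduli spaces on the blow-up, whereas the identification that actually exists has a base on the original surface.

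Concretely, the paper's engine is Nakajima--Yoshioka's Theorem \ref{thm:incidence}: for $v_d=f^*w-d\ch(\mcO_C(-1))$ one has global isomorphisms $M^0(v_d)\cong\Quot_{M_S(w),d}(\mcK)$ and $M^1(v_d)\cong\Quot_{M_S(w),d+w_0}(\mcG)$, where $\mcG=\mcF|_{M_S(w)\times\{o\}}$ is the restriction of the universal sheaf to the slice over the blown-up point and $\mcK=\mcE xt^1(\mcG,\mcO_{M_S(w)})$ --- exactly the pair $(\mcG,\mcK)$ appearing in the Quot formula. Hence a single application of Theorem \ref{thm:quotformula} per wall yields Proposition \ref{prop:DbM1}, with no stratification and no splicing; for $w=(1,0,-n')$ the base $M_S(w)$ is $\Hilb^{n'}(S)$, which is how the $D^b(\Hilb^{n-j}(S))$ pieces arise. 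The passage from the $0$--$1$ wall to the $k$--$(k+1)$ wall is then free: twisting by $\mcO(-kC)$ gives $M^k(v)\cong M^0(v.e^{-kC})$ and $M^{k+1}(v)\cong M^1(v.e^{-kC})$ (see (\ref{eq:tensor})), so every wall is the $0$--$1$ wall for a shifted Chern character, and the recursion terminates by the dimension count (\ref{eq:modexpdim}). Without this global Quot-scheme presentation of both sides of each wall (or a proved substitute), your argument does not close.
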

Indeed, by taking the Euler characteristics of 
the Hochschild homology groups of the derived categories, 
we recover the formulas (\ref{eq:blformula}) and (\ref{eq:introMk}). 
Note that in the previous work \cite{kos21}, the author proved that 
the sequence (\ref{eq:introny}) gives steps of a minimal model program 
for the Hilbert scheme $\Hilb^n(\hatS)$ on the blow-up. 
Hence we obtain an interesting relationship among three different fields; 
numerical invariants, birational geometry, and derived categories. 

Note that Theorem \ref{thm:sodfinal} is much more general and works 
for moduli spaces of higher rank sheaves 
when the surface $S$ is del Pezzo, K3, or abelian. 

\subsection{Sketch of the proof}
The main tool for the proof of Theorem \ref{thm:introhilb} is the {\it Quot formula}, 
which was conjectured by Jiang \cite{jia21} and recently proved by Toda \cite{tod21}. 
See Theorem \ref{thm:quotformula} for the precise statement. 

The key observation is that we can interpret the moduli space 
$M^k(1, 0, -n)$ as 
a certain Quot scheme on the Hilbert scheme $\Hilb^n(S)$ on the original surface $S$ 
(see Corollary \ref{cor:quot}). 
Then we can apply the Quot formula to the moduli spaces $M^k(1, 0, -n)$ 
to relate its derived category with the derived categories of 
$M^{k-1}(1, 0, j-n)$. 
Inductively, we get the SODs in Theorem \ref{thm:introhilb}.

\subsection{Related works}
\begin{enumerate}
\item In the previous paper \cite{kos21}, the author constructed a fully faithful embedding 
$D^b(M^k(1, 0, -n)) \hookrightarrow D^b(M^{k+1}(1, 0, -n))$ 
for each $n \geq 0$ and $k \geq 0$. 
The main theorem in the present paper refines the result 
by describing its semi-orthogonal complement. 

\item There are several works categorifying some representation theoretic structures 
on the cohomology of the Hilbert schemes: 
Negut \cite{neg18} and Zhao \cite{zha20} studied a quantum troidal action 
on the derived categories of the Hilbert schemes; 
Porta--Sala \cite{ps21} constructed a categorified Hall product 
on the derived categories of Hilbert schemes 
(more generally on the moduli spaces of higher rank sheaves). 

\item Toda \cite{tod21c} obtained SODs for matrix factorization categories 
of moduli spaces on the resolved conifold $\Tot_{\bP^1}(\mcO(-1) \oplus \mcO(-1))$, 
which categorify Nagao-Nakajima's wall-crossing formula \cite{nn11}, 
and are similar to the result in this paper. 
\end{enumerate}

\subsection{Open questions}
\begin{enumerate}
\item As already observed in \cite{jia21}, 
the Quot formula have applications to various Brill-Noether type theory. 
It would be interesting to find more applications 
to the wall-crossing diagrams of the moduli spaces, 
e.g., for variations of Bridgeland stability conditions. 

\item It would be also interesting to generalize the result 
to the moduli space of higher rank sheaves on an arbitrary surface 
(not necessarily del Pezzo, K3, or abelian). 
In general, the moduli space is highly singular 
and hence its bounded derived category of coherent sheaves 
would not behave well. 
Alternatively, we may use the DT categories 
of the canonical bundle $\omega_S$ introduced by Toda \cite{tod21b}. 

\item By \cite{bkr01}, we have an equivalence 
$D^b(\Hilb^n(S)) \cong \Sym^n(D^b(S))$ 
for any surface $S$. 
Hence we can think of Theorem  \ref{thm:introhilb} (2) as 
the relation between the categories 
$ \Sym^n(D^b(\hatS))$ and  $\Sym^{n-i}(D^b(S))$. 
It would be interesting if one can generalize it in a purely categorical way. 
Namely, given a triangulated dg category $\hat{\mcC}$ with an SOD 
$\hat{\mcC}=\langle \mcC, E \rangle$, where $E \in \hat{\mcC}$ is an exceptional object, 
can one relate the category $\Sym^n(\hat{\mcC})$ with 
the categories $\Sym^{n-i}(\mcC)$?
\end{enumerate}

\subsection{Plan of the paper}
In Section \ref{sec:quot}, we recall the statement of the Quot formula. 
In Section \ref{sec:wc}, we recall the construction of wall-crossing diagrams 
due to Nakajima--Yoshioka. 
In Section \ref{sec:incidence}, we interpret moduli spaces on the blow-up 
as the Quot schemes. 
In Section \ref{sec:blformula}, we prove our main theorem.

\begin{ACK}
The author would like to thank Professors Arend Bayer and Yukinobu Toda, and Qingyuan Jiang 
for fruitful discussions. 
This work was supported by 
ERC Consolidator grant WallCrossAG, no.~819864. 

Finally the author would like to thank the referee 
for the careful reading of the previous version of this paper 
and giving him a lot of useful comments. 
The author would also like to thank Professor Wu-yen Chuang for pointing out an error in Theorem 5.4 in the previous version. 
\end{ACK}

\begin{NaC}
In this paper, we work over the complex number field $\bC$. 
For a scheme $X$, we denote by $\Coh(X)$ the abelian category of coherent sheaves on $X$, 
and by $D^b(X)$ the bounded derived category of coherent sheaves on $X$. 
\end{NaC}

\section{The Quot formula} \label{sec:quot}
In this section, we recall the {\it Quot formula}, 
which was conjectured by Jiang \cite{jia21} and proved by Toda \cite{tod21}. 

Let $X$ be a smooth quasi-projective variety, 
$\mcG$ a coherent sheaf on $X$ of rank $\delta \geq 0$ 
with homological dimension less than or equal to one. 
For a non-negative integer $d \geq 0$, we denote by 
\[
\Quot_{X, d}(\mcG) \to X
\]
the relative Quot scheme of rank $d$ locally free quotients of $\mcG$. 

Let $\mcK \coloneqq \mcE xt^1(\mcG, \mcO_X)$. 
Then the {\it expected dimensions} of 
the schemes $\Quot_{X, d}(\mcG), \Quot_{X, d}(\mcK)$ are 
\[
\dim X+\delta d-d^2, \quad \dim X-\delta d-d^2, 
\] 
respectively. 

We have the following Quot formula: 
\begin{thm}[{\cite[Theorem 1.1]{tod21}}] \label{thm:quotformula}
Let $d \geq 0$ be an integer. 
Suppose that the Quot schemes 
$\Quot_{X, d}(\mcG)$ and $\Quot_{X, d-i}(\mcK)$ are smooth and 
have expected dimensions for all $i=0, \cdots, \min\{d, \delta\}$. 
Then we have an SOD 
\[
D^b(\Quot_{X, d}(\mcG))=\left\langle
{\delta \choose i}\mbox{-copies of } D^b(\Quot_{X, d-i}(\mcK)) \colon 
i=0, \cdots, \min\{d, \delta\}
\right\rangle. 
\]
\end{thm}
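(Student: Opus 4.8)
The plan is to deduce the Quot formula from the Cayley trick together with a variation-of-GIT (window) argument, reducing it to a computation in $\GL_d$-equivariant geometry. Since $\mcG$ has homological dimension $\leq 1$ one may work Zariski-locally on $X$ and assume a two-term locally free resolution
\[
0 \to \mcV_1 \xrightarrow{\phi} \mcV_0 \to \mcG \to 0, \qquad \rk \mcV_0 - \rk \mcV_1 = \delta,
\]
so that dualizing gives $\mcK = \Coker(\mcV_0^\vee \xrightarrow{\phi^\vee} \mcV_1^\vee)$, which then also has homological dimension $\leq 1$. On the relative Grassmannian $\pi \colon \Gr_X(d, \mcV_0) \to X$ with tautological rank-$d$ quotient $\pi^*\mcV_0 \twoheadrightarrow \mcQ$, the composite $\pi^*\mcV_1 \xrightarrow{\pi^*\phi} \pi^*\mcV_0 \to \mcQ$ is a canonical section $s$ of $\sHom(\pi^*\mcV_1, \mcQ)$ whose zero scheme is exactly $\Quot_{X,d}(\mcG)$; the hypothesis that the latter is smooth of expected dimension is equivalent to $s$ being a regular section. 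In the same way $\Quot_{X,d-i}(\mcK)$ is the regular zero scheme of the analogous section over $\Gr_X(d-i, \mcV_1^\vee)$.

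Next I would apply the Cayley trick, in the form of the Koszul-duality equivalences of Isik, Shipman and Hirano together with Orlov's comparison of singularity categories: for a regular section $s$ of a vector bundle $\mcF$ on a smooth stack $Y$ there is an equivalence $D^b(Z(s)) \simeq \mathrm{MF}\bigl(\Tot_Y(\mcF^\vee)/\mathbb{G}_m,\, W_s\bigr)$, with $W_s$ the tautological pairing and $\mathbb{G}_m$ scaling the fibres. Applying this to both sides turns the theorem into a statement about matrix-factorization categories on total spaces of bundles over relative Grassmannians, and a short computation — after the Kn\"orrer-periodicity equivalence cancelling the tautological quadratic directions — identifies both of them with the matrix-factorization category of a single potential $\tr(q\,\phi\,r)$ on the quotient stack of linear-algebra data $(q \colon \mcV_0 \to \mcQ,\ r \colon \mcQ \to \mcV_1)$ by $\GL(\mcQ)$, the two Quot-scheme pictures being its two GIT chambers: the chamber where $q$ is fibrewise surjective, and the chamber where $r$ is fibrewise injective (equivalently a rank-$(d-i)$ quotient of $\mcV_1^\vee$ after transposing). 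In short, the two sides are related by a relative ``Grassmannian flip''.

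The heart of the proof is then a window comparison across this wall. The $\GL(\mcQ)$-representation $\sHom(\mcV_0, \mcQ) \oplus \sHom(\mcQ, \mcV_1)$ is \emph{not} quasi-symmetric — the imbalance along each weight line being precisely $\delta$ — so one is in the ``flip'' rather than the ``flop'' regime, and I would invoke the general window theorems for GIT quotients and matrix factorizations (Ballard--Favero--Katzarkov, Halpern-Leistner, and the magic window formalism of Halpern-Leistner--Sam) to construct a chain of window subcategories interpolating between the two chambers. Crossing the wall, the window discrepancy is governed by the exterior algebra of the rank-$\delta$ bundle which is generically $\mcG$: its $i$-th exterior power, of rank $\binom{\delta}{i}$, accounts for a block of $\binom{\delta}{i}$ copies of $D^b(\Quot_{X,d-i}(\mcK))$ — the shift $d \mapsto d-i$ reflecting the use of $i$ of the $d$ directions of $\mcQ$ — and the order of the $i$'s yields the semiorthogonality; the hypotheses on the $\Quot_{X,d-i}(\mcK)$ for $i>0$ enter here, ensuring that the lower Kempf--Ness strata are smooth of the expected size. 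The case $d=1$ is the projectivization formula of Jiang--Leung, which serves as a model and a consistency check, and, for those preferring an inductive route, as the base of an induction on $d$ obtained by factoring a rank-$d$ quotient through a rank-$1$ quotient and checking that the resulting universal subsheaf again satisfies the hypotheses.

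Finally one globalizes: where a global two-term resolution exists the local matrix-factorization descriptions and window subcategories are functorial enough to glue over $X$, and in general one descends by a d\'evissage over an affine stratification, using that the resulting semiorthogonal decomposition is canonical and stable under base change. I expect the main obstacle to be precisely this window step — arranging the weight windows so that the wall-crossing produces the $\binom{\delta}{i}$-fold multiplicities for all $i = 0, \dots, \min\{d, \delta\}$ simultaneously and in the correct semiorthogonal order, and doing so uniformly enough to pass from the local models to an arbitrary smooth quasi-projective $X$.
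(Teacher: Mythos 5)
First, a point of order: the paper does not prove Theorem \ref{thm:quotformula} at all. It is quoted verbatim from Toda \cite[Theorem 1.1]{tod21} (the statement having been conjectured by Jiang \cite{jia21}), and the only surrounding content here is the remark that the smoothness and expected-dimension hypotheses let one avoid quasi-smooth derived structures on the Quot schemes. So there is no in-paper argument to compare yours against; what you have written is a proposed reconstruction of an external proof.

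Judged on its own terms, your outline does track the known strategy reasonably closely: the realisation of $\Quot_{X,d}(\mcG)$ as the zero locus of a regular section over a relative Grassmannian, the Koszul-duality passage to factorization categories, and the analysis of the quotient stack of linear-algebra data $(q,r)$ with potential $\tr(q\phi r)$, whose two GIT chambers give the two Quot schemes of a non-quasi-symmetric (``flip'') representation, are genuinely the ingredients of \cite{tod21}, where the comparison is organised via categorified Hall products and window subcategories. Two caveats. (i) The Zariski-local reduction followed by gluing is both unnecessary and hazardous: on a smooth quasi-projective $X$ a coherent sheaf of homological dimension at most one admits a \emph{global} two-term resolution by vector bundles, so the Grassmannian model is already global; conversely, semiorthogonal decompositions do not glue over an open cover, so the ``d\'evissage over an affine stratification'' step as stated would not go through. (ii) More seriously, the entire content of the theorem --- that the discrepancy across the wall is exactly $\binom{\delta}{i}$ copies of $D^b(\Quot_{X,d-i}(\mcK))$ for $i=0,\dots,\min\{d,\delta\}$, in that semiorthogonal order --- is deferred to ``invoke the general window theorems''. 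Those theorems supply fully faithful window subcategories and chamber comparisons in favourable situations, but extracting this particular decomposition with these multiplicities is precisely the hard computation, and you flag it as an obstacle rather than carry it out. As a blind proof the proposal therefore has a genuine gap at the decisive step, even though the scaffolding around that step is sound and consistent with how the result is actually proved.
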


\begin{rmk}
In \cite{tod21}, Toda proved the Quot formula without assuming 
the smoothness and the expected dimension condition of the Quot schemes. 
In that case, we need to encode the Quot schemes with certain quasi-smooth derived structures. 
See \cite[Remark 1.2]{tod21}. 
\end{rmk}

\section{Moduli spaces on the blow-up} \label{sec:wc}
Let $S$ be a smooth projective surface, 
$f \colon \hat{S} \to S$ be the blow-up of $S$ at a point $o \in S$. 
We denote by $C \subset \hatS$ the $f$-exceptional curve. 
Let $H$ be an ample divisor on $S$. 

\begin{defin}
Let $m \in \bZ_{\geq 0}$ be a non-negative integer. 
A coherent sheaf $E \in \Coh(\hatS)$ is {\it $m$-stable} if the following conditions hold: 
\begin{enumerate}
\item $\Hom(E(-mC), \mcO_C(-1))=0$, 
\item $\Hom(\mcO_C, E(-mC))=0$, 
\item $f_*E$ is $\mu_H$-stable. 
\end{enumerate}
\end{defin}

Take a cohomology class $w=(w_0, w_1, w_2) \in H^{2*}(S, \bQ)$. 
We work under the following assumption: 
\begin{assump} \label{ass:smooth}
The following three conditions hold: 
\begin{enumerate}
\item $w_0 >0$, 
\item $\gcd(w_0, H.w_1)=1$, 
\item one of the following conditions holds: 
\begin{enumerate}
\item $w_0=1, w_1=0$; 
\item $S$ is a del Pezzo, K3, or Abelian surface. 
\end{enumerate}
\end{enumerate}
\end{assump}

We denote by $M_S(w)$ (resp. $M_{\hatS}(v)$) 
the moduli space of $H$-Gieseker semistable sheaves on $S$ 
(resp. $(f^*H-\epsilon C)$-semistable sheaves on $\hatS$ with $0 < \epsilon \ll 1$) 
with Chern character $w$ (resp. $v \in H^{2*}(\hatS, \bQ)$), 
and we denote by $M^k(v)$ the moduli space of 
$k$-stable sheaves on $\hatS$ 
with the fixed Chern character $v \in H^{2*}(\hat{S}, \bQ)$. 

\begin{lem} \label{lem:sm}
Let $k, d \in \bZ_{\geq 0}$ be integers. 
Fix a class $w \in H^{2*}(S, \bQ)$ 
and put $v_d \coloneqq f^*w-d\ch(\mcO_C(-1)) \in H^{2*}(\hatS, \bQ)$. 
Under Assumption \ref{ass:smooth}, the moduli space $M^k(v_d)$ is 
either empty or smooth projective variety of dimension 
\begin{equation} \label{eq:modexpdim}
\Delta(w)-(w_0^2-1)\chi(\mcO_S)+h^1(\mcO_S)-d(w_0+d), 
\end{equation}
where $\Delta(w)\coloneqq w_1^2-2w_0w_2$ denotes the discriminant. 
\end{lem}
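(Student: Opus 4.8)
The plan is to reduce the statement to known smoothness and dimension results for moduli of Gieseker-stable sheaves, together with the wall-crossing comparison between $m$-stability for different values of $m$. First I would observe that $m$-stability is an open condition in families and that, by the work of Nakajima--Yoshioka (which is what Section \ref{sec:wc} is recalling), the moduli space $M^k(v_d)$ is a fine moduli space that differs from $M_{\hatS}(v_d)$ only by a finite sequence of explicit birational modifications (flips along projective-bundle loci sitting over $\Hilb$-type parameter spaces). Since birational maps of this flip type preserve smoothness and dimension when the centers are smooth, it suffices to check the two assertions for $k=0$ (or, after the wall-crossings, directly for $M_{\hatS}(v_d)$), where $0$-stability essentially coincides with $(f^*H-\epsilon C)$-Gieseker stability up to the conditions (1),(2) cutting out an open locus.

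Next I would verify emptiness-or-smoothness: under Assumption \ref{ass:smooth}, condition (2) ($\gcd(w_0,H.w_1)=1$) ensures that $\mu_H$-semistability equals $\mu_H$-stability for the pushforward, hence on $\hatS$ there are no strictly semistable sheaves on the relevant wall, so the moduli space is a smooth quasi-projective variety away from obstructions; condition (3) then kills the obstruction space. Concretely, for an $m$-stable sheaf $E$ the obstruction to deformations lies in $\Ext^2(E,E)_0$, which by Serre duality on $\hatS$ is dual to $\Hom(E,E\otimes\omega_{\hatS})_0$. In case (3)(b), $\omega_S$ (hence a twist of $\omega_{\hatS}$) is either trivial or anti-ample in a way that forces this Hom-space to vanish by stability; in case (3)(a), $w_0=1,w_1=0$, so $E$ is (a twist of) an ideal sheaf and one argues the vanishing directly. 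Properness of $M^k(v_d)$ follows from the Nakajima--Yoshioka construction (each $M^k$ is projective). This gives the ``either empty or smooth projective'' dichotomy.

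For the dimension formula, on the smooth locus the dimension equals $\ext^1(E,E)_0 = -\chi(E,E)_0 = -\chi(E,E)+1$ (using $\Ext^2_0=0$ and $\hom_0=0$ for a stable sheaf, so only $h^1(\mcO_{\hatS})=h^1(\mcO_S)$ contributes the $+1$-type correction). I would compute $\chi(E,E)$ on $\hatS$ by Hirzebruch--Riemann--Roch with $\ch(E)=v_d=f^*w-d\,\ch(\mcO_C(-1))$, using the projection formula, $f_*\mcO_{\hatS}=\mcO_S$, $f^*$ being a ring map on cohomology, and the standard intersection numbers on the blow-up ($C^2=-1$, $f^*(\cdot).C=0$). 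The cross terms between $f^*w$ and $\ch(\mcO_C(-1))$ vanish by $f^*(\cdot).C=0$ except for a controlled contribution, and $\chi(\mcO_C(-1))=0$ while $\chi$ of the self-pairing of $\ch(\mcO_C(-1))$ produces the $-d(w_0+d)$ term; the $f^*w$-part reproduces exactly $-\chi_S(w,w)+1 = \Delta(w)-(w_0^2-1)\chi(\mcO_S)+h^1(\mcO_S)$ by the corresponding formula on $S$. Assembling these gives \eqref{eq:modexpdim}.

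The main obstacle I anticipate is the obstruction-vanishing step, i.e.\ rigorously showing $\Ext^2(E,E)_0=0$ for $m$-stable $E$ across all cases of Assumption \ref{ass:smooth}(3) — in particular handling the interaction between the defining conditions (1),(2) of $m$-stability and Serre duality on the \emph{blow-up} $\hatS$ (whose canonical bundle is $f^*\omega_S+C$, not a pullback), so the del Pezzo/K3/abelian hypothesis on $S$ does not transfer verbatim to $\hatS$. I expect one must use the conditions $\Hom(\mcO_C,E(-mC))=0$ and $\Hom(E(-mC),\mcO_C(-1))=0$ together with $\mu_H$-stability of $f_*E$ to control the extra $C$-direction; the bookkeeping there is the delicate part, whereas the Riemann--Roch computation is routine.
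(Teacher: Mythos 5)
Your overall skeleton (unobstructedness via vanishing of the traceless $\Ext^2$, Serre duality, Riemann--Roch for the dimension, projectivity from Nakajima--Yoshioka) matches the paper, but there are two genuine problems. First, the opening reduction to $k=0$ (or to $M_{\hatS}(v_d)$) on the grounds that ``flips of this type preserve smoothness'' is both unnecessary and close to circular: the Nakajima--Yoshioka description of the wall-crossings as modifications along smooth centers presupposes that the $M^k(v_d)$ are smooth, and a birational map between two moduli spaces does not by itself transport smoothness from one side to the other. The paper instead proves the obstruction vanishing directly and uniformly for an arbitrary $k$-stable sheaf $E$.

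Second, and more seriously, you explicitly leave open the one step that carries all the content: the vanishing of $\Hom(E,E\otimes\omega_{\hatS})_0$. Your suggested mechanism (stability plus triviality/anti-ampleness of $\omega_S$) does not apply on $\hatS$, for exactly the reason you flag yourself: $\omega_{\hatS}=f^*\omega_S+C$, so for instance when $S$ is K3 the bundle $\omega_{\hatS}$ is effective and nontrivial. The missing idea is the embedding from \cite[Lemma 3.6]{ny11},
\[
\Hom(E,E\otimes\omega_{\hatS}) \hookrightarrow \Hom\bigl((f_*E)^{\vee\vee},(f_*E)^{\vee\vee}\otimes\omega_S\bigr),
\]
which transports the computation down to $S$, where Assumption \ref{ass:smooth}(3) applies verbatim. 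One then sandwiches $h^0(\omega_{\hatS})\leq \hom(E,E\otimes\omega_{\hatS})\leq \hom\bigl((f_*E)^{\vee\vee},(f_*E)^{\vee\vee}\otimes\omega_S\bigr)$ and checks the outer terms agree: the right-hand side is evaluated using that $(f_*E)^{\vee\vee}\cong (f_*(E(-kC)))^{\vee\vee}$ is $\mu_H$-stable (this is where $k$-stability actually enters, rather than through conditions (1) and (2) controlling the $C$-direction as you anticipated), giving $0$ for del Pezzo, $1$ for K3/abelian, and $h^0(\omega_S)$ when $w_0=1$, $w_1=0$ since then the double dual is $\mcO_S$; the left-hand side is $h^0(\omega_S)$ by birational invariance. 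Combined with the trace splitting $\hom(E,E\otimes\omega_{\hatS})=h^0(\omega_{\hatS})+\hom(E,E\otimes\omega_{\hatS})_0$, this forces the traceless part to vanish. Without this embedding your sketch has no mechanism to close the gap; the Riemann--Roch dimension count and the appeal to projectivity are fine.
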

\begin{proof}

The projectivity follows from the Assumption \ref{ass:smooth} (2) 
(cf. \cite[Theorem 2.9]{ny11} and the sentence before Lemma 2.8 
in the same paper). 

For the smoothness, it is enough to show the vanishing 
\[
\Ext^2(E, E)_0 \cong \Hom(E, E \otimes \omega_{\hat{S}})^\vee_0=0 
\]
for all $E \in M^0(v_d)$ by \cite[Theorem 4.5.4]{hl97}. 
Here, for an integer $i$ and a line bundle $L$ on $\hatS$, 
$\Ext^i(E, E \otimes L)_0$ denotes the traceless part, i.e., 
\[
\Ext^i(E, E \otimes L)_0 \coloneqq
\ker\left(
\tr \colon \Ext^i(E, E \otimes L) \to H^i(L)
\right). 
\]

To prove this, first recall that there is a homomorphism 
$\iota \colon H^i(L) \to \Ext^i(E, E \otimes L)$ satisfying 
$\tr \circ \iota = \rk(E) \cdot \id \colon H^i(L) \to H^i(L)$ 
(see Lemma 10.1.3 and a comment before Definition 10.1.4 in \cite{hl97}). 
In particular, we have 
\[
\hom(E, E \otimes \omega_{\hatS})
=h^0(\omega_{\hatS})+\hom(E, E \otimes \omega_{\hatS})_0 
\]
since we assume $w_0=\rk(E) >0$. 
Moreover, by \cite[Lemma 3.6]{ny11}, we have an embedding 
\[
\Hom\left(E, E \otimes \omega_{\hatS} \right) \hookrightarrow 
\Hom\left((f_*E)^{\vee\vee},(f_*E)^{\vee\vee} \otimes \omega_{S} \right). 
\]
Combining these two facts, we have the following inequalities: 
\[
h^0(\omega_{\hatS}) \leq \hom(E, E \otimes \omega_{\hatS}) 
\leq \hom\left((f_*E)^{\vee\vee},(f_*E)^{\vee\vee} \otimes \omega_{S} \right).
\]

Now the problem is reduced to proving the following equalities: 
\[
\hom\left((f_*E)^{\vee\vee},(f_*E)^{\vee\vee} \otimes \omega_{S} \right)
=h^0(\omega_S)=h^0(\omega_{\hatS}). 
\]
The second equality holds since $\hatS$ and $S$ are birational. 
For the first equality, first note that 
$f_*(E(-kC))$ is $\mu_H$-stable by the definition of $k$-stability, 
and hence so is $(f_*(E(-kC)))^{\vee\vee}$. 
Moreover, since $f_*(E(-kC))$ and $f_*E$ are isomorphic away from 
the point $o \in S$, we have 
$(f_*(E(-kC)))^{\vee\vee} \cong (f_*E)^{\vee\vee}$. 
The desired equality then follows from Assumption \ref{ass:smooth} (3). 
Indeed, in the case (3-a), we have $(f_*E)^{\vee\vee} \cong \mcO_S$; 
in the case when $S$ is del Pezzo, we have 
$\hom\left((f_*E)^{\vee\vee},(f_*E)^{\vee\vee} \otimes \omega_{S} \right)
=0=h^0(\omega_S)$; 
in the case when $S$ is K3 or Abelian, we have 
$\hom\left((f_*E)^{\vee\vee},(f_*E)^{\vee\vee} \otimes \omega_{S} \right)
=1=h^0(\omega_S)$. 

The formula (\ref{eq:modexpdim}) of the dimension 
follows from the Riemann-Roch formula. 
\end{proof}

The following theorem is a part of the main results by Nakajima--Yoshioka, 
see \cite[equation (*) in page 48, Propositions 3.3 and 3.37]{ny11}: 
\begin{thm}[\cite{ny11}] \label{thm:NY}
Fix a class $w \in H^{2*}(S, \bQ)$ and put $v_0 \coloneqq f^*w$. 
There exists an integer $m_0 \geq 0$ and a sequence 
\begin{equation} \label{eq:NY}
M^0(v_0) \dashleftarrow M^1(v_0) \dashleftarrow \cdots 
\dashleftarrow M^{m-1}(v_0) \dashleftarrow M^m(v_0) \dashleftarrow \cdots 
\end{equation}
of birational maps satisfying 
$M^0(v_0) \cong M_S(w)$ and $M^m(v_0) \cong M_{\hatS}(v_0)$ 
for all $m \geq m_0$. 
\end{thm}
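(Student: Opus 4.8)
This statement is \cite[equation~$(\ast)$ on p.~48, and Propositions~3.3 and~3.37]{ny11}; here I only outline the shape of the argument. Recall that by Lemma~\ref{lem:sm} each $M^k(v_0)$ is a smooth projective variety (or empty). The plan is to connect $M_S(w)$ and $M_{\hatS}(v_0)$ through the chain $\{M^k(v_0)\}_{k\ge 0}$, using three inputs: an identification of the left endpoint $M^0(v_0)$, an identification of the term $M^m(v_0)$ for $m\gg 0$, and a comparison of consecutive terms.

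\emph{The endpoints.} For a torsion-free sheaf $E$ on $\hatS$ with $\ch(E)=f^*w$, I would show that $0$-stability is equivalent to $E\cong f^*F$ with $F=f_*E$ a $\mu_H$-stable sheaf on $S$. The ``if'' direction is immediate: $f_*\mcO_C(-1)=0$ gives $\Hom(f^*F,\mcO_C(-1))=\Hom(F,f_*\mcO_C(-1))=0$; $\Hom(\mcO_C,f^*F)=0$ because $f^*F$ is torsion-free; and $f_*f^*F=F$. For the converse, $c_1(E)\cdot C=f^*w_1\cdot C=0$ together with condition~(1) forces the restriction $E|_C$ to be trivial, so the adjunction morphism $f^*f_*E\to E$ is an isomorphism, and then condition~(3) says precisely that $f_*E$ is $\mu_H$-stable; conceptually, (1)--(2) at $m=0$ cut out the subcategory $f^*\Coh(S)$ of the perverse heart used in \cite{ny11}. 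Functoriality of $f^*,f_*$ matches the moduli functors, giving $M^0(v_0)\cong M_S(w)$. For $m\gg 0$ I would prove $M^m(v_0)\cong M_{\hatS}(v_0)$ by boundedness: fixing a small $\epsilon>0$, the family of $(f^*H-\epsilon C)$-Gieseker semistable sheaves of Chern character $v_0$ is bounded, and since $\mcO_{\hatS}(-C)|_C=\mcO_C(1)$, the restriction $E(-mC)|_C$ becomes sufficiently positive for $m\gg 0$ that $\Hom(E(-mC),\mcO_C(-1))=0$ uniformly over the family (while $\Hom(\mcO_C,E(-mC))=0$ automatically, $E$ being torsion-free); conversely $m$-stability coincides with $(f^*H-\epsilon C)$-Gieseker stability for an $\epsilon$ which, for $m\gg 0$, lies in the chamber computing $M_{\hatS}(v_0)$. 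Under Assumption~\ref{ass:smooth}(2) semistability equals stability, so the two moduli functors agree.

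\emph{The birational maps.} For each $k$ I would compare $k$- with $(k+1)$-stability. A sheaf that is $k$-stable but not $(k+1)$-stable carries a nonzero morphism to, or from, an appropriate twist of $\mcO_C(-1)$, and performing the resulting elementary modifications along $C$ sets up a Hecke-type correspondence between $M^k(v_0)$ and $M^{k+1}(v_0)$ that restricts to an isomorphism over the common open subscheme $U$ of sheaves which are simultaneously $k$- and $(k+1)$-stable. The substance of the statement is that $U$ is dense in both: the complements are swept out by these correspondences, and bounding the dimensions of the relevant Hecke loci --- by $\Ext$-computations and the dimension formula of Lemma~\ref{lem:sm} applied to the shifted Chern characters --- shows they have codimension $\ge 1$. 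Hence the $M^k(v_0)$ are mutually birational, which, together with the endpoint identifications, proves the theorem. (In fact \cite{ny11} describes these modifications explicitly, and \cite{kos21} shows they give steps of a minimal model program.)

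\emph{Main obstacle.} The technical heart is the density assertion in the last step: one has to classify exactly which subsheaves and quotients destabilise at each wall $m=k$, describe the modifications explicitly, and estimate the dimensions of the Hecke correspondences. This is the content of the wall-crossing analysis of \cite{ny11}; by comparison, the endpoint identifications are essentially formal once the perverse $t$-structure and the boundedness of the relevant families are in place.
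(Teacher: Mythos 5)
The paper offers no proof of this theorem at all: it is quoted verbatim from Nakajima--Yoshioka, with exactly the references you give (equation $(\ast)$ on p.~48 and Propositions 3.3 and 3.37 of \cite{ny11}), so your deferral to that paper is precisely what the author does, and your outline of the endpoint identifications plus wall-crossing by elementary modifications along $C$ is a fair summary of the argument carried out there. One caution about your sketch of the left endpoint: when $F$ is torsion-free but not locally free at $o$, the pullback $f^*F$ is \emph{not} torsion-free (e.g.\ $f^*I_o$ has torsion $\mcO_C(-1)$ along $C$), so the vanishing $\Hom(\mcO_C, f^*F)=0$ and the identification of $0$-stable sheaves with pullbacks cannot be justified by torsion-freeness or by triviality of $E|_C$; in \cite{ny11} this is handled through the perverse heart $\Per(\hatS/S)$, where the torsion of $f^*F$ is controlled by the simple objects $\mcO_C(-1)$. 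This does not affect the validity of the citation, only the accuracy of the informal justification.
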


\begin{rmk}
By \cite[Theorem 1.3]{kos21}, the sequence (\ref{eq:NY}) are 
steps of an MMP for $M_{\hatS}(v_0)$. 
\end{rmk}

Note that $(-) \otimes \mcO(-kC)$ induces isomorphisms 
\begin{equation} \label{eq:tensor}
M^k(v) \cong M^0(v.e^{-kC}), \quad 
M^{k+1}(v) \cong M^1(v.e^{-kC}) 
\end{equation}
for each $k \geq 0$ and any class $v \in H^{2*}(\hatS, \bQ)$.

\section{Wall-crossing via Quot schemes} \label{sec:incidence}
In this section, we recall 
the interpretation of the moduli spaces $M^k(v)$ 
as the Quot schemes following Nakojima--Yoshioka. 
Throughout this section, we fix a class 
$w=(w_0, w_1, w_2) \in H^{2*}(S, \bQ)$ satisfying 
$w_0>0$ and $\gcd(w_0, H.w_1)=1$. 

\begin{thm}[{\cite[Theorem 4.1]{ny11}}] \label{thm:incidence}
Let $\mcF \in \Coh(M_S(w) \times S)$ be the universal sheaf. 
We put $\mcG \coloneqq \mcF|_{M_S(w) \times \{o\}}$ and 
$\mcK \coloneqq \mcE xt^1(\mcG, \mcO_{M_S(w)})$. 
Let $d \in \bZ_{>0}$ be a positive integer. 
The following statements hold: 
\begin{enumerate}
\item The sets of closed points of the Quot schemes 
$\Quot_{M_S(w), d}(\mcK)$ and $\Quot_{M_S(w), d+w_0}(\mcG)$ 
are given as follows: 
\begin{equation} \label{eq:incidence}
\begin{aligned}
&\Quot_{M_S(w), d}(\mcK)(\Spec\bC) = 
\left\{
(F, U) \colon 
\begin{aligned}
&F \in M_S(w), \\
&U \subset \Ext^1(\mcO_o, F), \dim U=d
\end{aligned}
\right\}, \\
&\Quot_{M_S(w), d+w_0}(\mcG)(\Spec\bC) = 
\left\{
(F, V) \colon 
\begin{aligned}
&F \in M_S(w), \\
&V \subset \Hom(F, \mcO_o), \dim V=d+w_0
\end{aligned}
\right\}. \\
\end{aligned}
\end{equation}

\item Let us put $v_d \coloneqq f^*w-d\ch(\mcO_C(-1))$. 
Then we have isomorphisms 
\begin{equation} \label{eq:isomquot}
\begin{aligned}
M^0(v_d) \cong \Quot_{M_S(w), d}(\mcK), \quad 
M^1(v_d) \cong \Quot_{M_S(w), d+w_0}(\mcG). 
\end{aligned}
\end{equation}
\end{enumerate}
\end{thm}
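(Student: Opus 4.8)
\medskip
\noindent\emph{Sketch of the intended proof.} This is \cite[Theorem~4.1]{ny11}; I would organise the proof in the following steps.

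\emph{Step (1): closed points.} Since $\mcF$ is flat over $M_S(w)$, forming $\mcG=\mcF|_{M_S(w)\times\{o\}}$ commutes with restriction to a fibre, so for $[F]\in M_S(w)$ one has $\mcG\otimes k([F])\cong F\otimes_{\mcO_S}\mcO_o$. By the defining property of the relative Quot scheme, a closed point of $\Quot_{M_S(w),e}(\mcG)$ over $[F]$ is an $e$-dimensional quotient of $F\otimes\mcO_o$, i.e.\ an $e$-dimensional subspace of $(F\otimes\mcO_o)^\vee\cong\Hom_S(F,\mcO_o)$; with $e=d+w_0$ this gives the second line of~(\ref{eq:incidence}). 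For $\mcK$, choose locally on $M_S(w)$ a two-term resolution $0\to P_1\to P_0\to\mcG\to 0$ by vector bundles, available since $\mcG$ has homological dimension $\le 1$ ($M_S(w)$ being smooth); then $\mcK=\Coker(P_0^\vee\to P_1^\vee)$ and hence $\mcK\otimes k([F])\cong\mcT or_1^{\mcO_{M_S(w)}}(\mcG,k([F]))^\vee$. A base-change computation, using again the flatness of $\mcF$ and the Koszul resolution of $\mcO_o$ on $S$, identifies $\mcT or_1^{\mcO_{M_S(w)}}(\mcG,k([F]))$ with $\mcT or_1^{\mcO_S}(F,\mcO_o)$, which is canonically $\Ext^1_S(\mcO_o,F)$ up to a twist by a line pulled back from $o$ (hence harmless). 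Thus a closed point of $\Quot_{M_S(w),d}(\mcK)$ over $[F]$ is a $d$-dimensional subspace $U\subseteq\Ext^1_S(\mcO_o,F)$, which is the first line of~(\ref{eq:incidence}).

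\emph{Step (2): the isomorphisms.} I would match moduli functors. Let $T$ be a test scheme and $\mcE$ a $T$-flat family of $m$-stable sheaves on $\hatS$ with $\ch(\mcE_t)=v_d$, where $m\in\{0,1\}$. The $m$-stability conditions — in particular~(1) — force $R^1f_*\mcE=0$, so $\dR(f\times\id_T)_*\mcE$ is a $T$-flat family of $H$-Gieseker stable sheaves on $S$ of class $w$ (torsion-freeness and $\mu_H$-stability again coming from $m$-stability), hence a morphism $\phi\colon T\to M_S(w)$; write $\mcF_T$ for the induced family on $S\times T$. Working in the perverse-coherent heart $\Per^m(\hatS/S)$ of \cite{ny11} (and its relative version over $T$), which contains $\dL f^*\mcF_T$ even where $\mcF_{T,t}$ fails to be locally free at $o$, the family $\mcE$ sits in an extension
\[
0\longrightarrow \dL f^*\mcF_T\longrightarrow \mcE\longrightarrow \mcQ\longrightarrow 0
\]
with $\mcQ$ supported on $C\times T$ and $\ch(\mcQ_t)=-d\,\ch(\mcO_C(-1))$. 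The extension class is governed by the blow-up semiorthogonal decomposition $D^b(\hatS\times T)=\langle\mcO_C(-1)\boxtimes D^b(T),\ \dL f^*D^b(S\times T)\rangle$: for $m=0$, using $\Hom_{\hatS}(\mcO_C(-1),\dL f^*F)\cong\mcT or_1^{\mcO_S}(F,\mcO_o)\cong(\mcK\otimes k([F]))^\vee$ and the $\Hom$-vanishings of $0$-stability, it is exactly a rank-$d$ locally free quotient of $\phi^*\mcK$, i.e.\ a $T$-point of $\Quot_{M_S(w),d}(\mcK)$; for $m=1$ the analogous analysis in $\Per^1$ produces a rank-$(d+w_0)$ locally free quotient of $\phi^*\mcG$. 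Conversely, from such a $T$-point one forms the universal extension of $\dL f^*\mcF_T$ along $C$ and checks that it is again a $T$-flat family of $m$-stable sheaves of class $v_d$; the bookkeeping $\ch(\mcO_C(-1))=[C]-\tfrac12[\pt]$, $\ch(\mcO(\pm C))=1\pm[C]-\tfrac12[\pt]$ forces precisely the parameters $d$ and $d+w_0$, consistently with~(\ref{eq:tensor}). These assignments are mutually inverse, yielding~(\ref{eq:isomquot}).

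\emph{Main obstacle.} The substantive point is the family statement in Step~(2): one must show that the pushforwards and the $\mcE xt$/$\mcT or$-sheaves above commute with arbitrary base change over $T$, and that the converse construction produces a genuinely $T$-flat family of $m$-stable sheaves rather than merely the correct fibres. This is exactly where the smoothness hypotheses of Assumption~\ref{ass:smooth} and the perverse-coherent $t$-structure machinery of Nakajima--Yoshioka enter, and it is carried out in \cite{ny11}; specialised to a point, the statement reduces to the bijection of Step~(1) together with their classification of $m$-stable sheaves.
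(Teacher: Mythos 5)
Your proposal is correct and follows essentially the same route as the paper: for part (1) the paper gives exactly your duality argument $F|_{\{o\}}\cong\Hom(F,\mcO_o)^\vee$ for the $\mcG$-case (omitting the analogous $\mcK$-case that you spell out via the two-term resolution), and for part (2) the paper simply cites \cite[Theorem 4.1]{ny11} rather than reproving it. Your sketch of the Nakajima--Yoshioka functor-matching argument for (2) is a reasonable reconstruction of the cited proof, so there is nothing to object to.
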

\begin{proof}
(1) We only prove the assertion for $\Quot_{M_S(w), d+w_0}(\mcG)$. 
By the definition of the Quot scheme, 
the fiber of $\Quot_{M_S(w), d+w_0}(\mcG) \to M_S(w)$ 
at a point $F \in M_S(w)$ parametrizes quotients of 
the vector space $F|_{\{o\}}$. 
Since we have isomorphisms  
\[
F|_{\{o\}} \cong \Hom_{\{o\}}(F|_{\{o\}}, \mcO_o)^\vee\cong 
\Hom(F, \mcO_o)^\vee, 
\]
giving a quotient $F|_{\{o\}} \twoheadrightarrow V^\vee$ 
is equivalent to giving a subspace 
$V \subset \Hom(F, \mcO_o)$. 
This proves the first assertion. 

(2) The second assertion is proved in \cite[Theorem 4.1]{ny11}. 
\end{proof}

\begin{lem} \label{lem:twoterm}
Suppose that Assumption \ref{ass:smooth} holds. 
Let $\mcF \in \Coh(M_S(w) \times S)$ be the universal sheaf. 
Then there exists an exact sequence 
\[
0 \to \mcV_0 \to \mcV_1 \to \mcF|_{M_S(w) \times \{o\}} \to 0
\]
for some vector bundles $\mcV_0, \mcV_1$. 
\end{lem}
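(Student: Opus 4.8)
The plan is to show that the coherent sheaf $\mcG \coloneqq \mcF|_{M_S(w) \times \{o\}}$ on the smooth projective variety $M_S(w)$ admits a two-term locally free resolution, which is equivalent to saying that $\mcG$ has homological dimension at most one everywhere on $M_S(w)$. Since $M_S(w)$ is smooth and quasi-projective (in fact projective by Assumption \ref{ass:smooth}), any coherent sheaf has a finite locally free resolution, so the only issue is to bound the length. By the Auslander--Buchsbaum formula, it suffices to prove that for every closed point $F \in M_S(w)$, the stalk $\mcG_F$ has projective dimension at most $1$ over the regular local ring $\mcO_{M_S(w), F}$, equivalently that $\mcT or^2_{\mcO_{M_S(w)}}(\mcG, \mcO_F) = 0$, equivalently (by a standard argument) that the natural truncation of a locally free resolution is itself locally free in degree $-1$.

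First I would identify $\mcG$ with a sheaf built from the evaluation of the universal sheaf at the point $o \in S$. Concretely, consider the closed embedding $\iota \colon M_S(w) \times \{o\} \hookrightarrow M_S(w) \times S$ and note $\mcG = \dL\iota^* \mcF$ in degree zero; since $\{o\} \subset S$ is cut out by a regular sequence of length $2$ (as $S$ is a smooth surface and $o$ a smooth point), the derived restriction $\dL\iota^*\mcF$ is computed by the Koszul complex, and its cohomology sheaves are controlled by the local structure of $\mcF$ along $M_S(w) \times \{o\}$. The key input is that $\mcF$ can be chosen to be a \emph{sheaf flat over $M_S(w)$} — indeed, the universal sheaf restricted to each fiber $\{F\} \times S$ is the torsion-free (Gieseker stable) sheaf $F$, so $\mcF$ is $M_S(w)$-flat — and moreover, after possibly twisting, $\mcF$ has homological dimension at most $1$ as a sheaf on $M_S(w) \times S$, because $\mcF$ is $M_S(w)$-flat and each fiber $F$ has homological dimension at most $1$ on the surface $S$ (any torsion-free sheaf on a smooth surface does, by Auslander--Buchsbaum applied to the surface case plus flatness).

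Given that $\mcF$ has homological dimension $\leq 1$ on $M_S(w) \times S$, I would pick a two-term locally free resolution $0 \to \mcW_0 \to \mcW_1 \to \mcF \to 0$ locally, and restrict to $M_S(w) \times \{o\}$; the hypercohomology spectral sequence for $\dL\iota^*$ together with flatness of $\mcF$ over $M_S(w)$ (which kills the higher Tor contributions coming from the $S$-direction) shows that $\mcG = \iota^*\mcF$ again has homological dimension at most $1$ on $M_S(w)$. Then the universal property of syzygies on the smooth variety $M_S(w)$ produces a global two-term locally free resolution: take any surjection $\mcV_1 \twoheadrightarrow \mcG$ from a locally free sheaf (possible since $M_S(w)$ is quasi-projective), and set $\mcV_0 \coloneqq \ker(\mcV_1 \to \mcG)$; the homological dimension bound forces $\mcV_0$ to be locally free, and we are done.

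The main obstacle I anticipate is the flatness/homological-dimension bookkeeping for $\mcF$ itself: one must be careful that the universal sheaf exists as an honest sheaf (this uses Assumption \ref{ass:smooth}(2), $\gcd(w_0, H.w_1)=1$, which guarantees a universal family rather than merely a twisted one) and that it genuinely has homological dimension at most $1$ on the threefold-or-higher-dimensional space $M_S(w) \times S$. This last point is the crux: it follows from the fact that $\mcF$ is flat over $M_S(w)$ with fibers of homological dimension $\leq 1$ over the smooth surface $S$, combined with the local criterion for flatness and Auslander--Buchsbaum, but it is worth spelling out rather than asserting. Once that is in hand, everything else is the routine syzygy argument on a smooth variety.
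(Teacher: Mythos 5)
Your reduction to a homological-dimension bound and the final syzygy step are fine, and your observation that $\mcF$ itself has homological dimension $\leq 1$ on $M_S(w)\times S$ (flatness over $M_S(w)$ plus torsion-freeness of the fibres, via the depth formula for flat modules and Auslander--Buchsbaum) is correct. But the crux of your argument --- that ``flatness of $\mcF$ over $M_S(w)$ kills the higher Tor contributions coming from the $S$-direction'' when restricting to $M_S(w)\times\{o\}$ --- is not true, and this is exactly where the real content of the lemma lies. Flatness over the base together with torsion-freeness of the fibres only gives $\operatorname{depth}\geq 1$ of $F$ at $o$, whereas killing $\mcT or_1^{M_S(w)\times S}(\mcF,\mcO_{M_S(w)\times\{o\}})$ requires the two local equations of $o$ to form a regular sequence on $\mcF$, i.e.\ depth $2$ in the fibre direction. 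This fails precisely over the locus where $F$ is not locally free at $o$: for instance, for the constant family $\mcF=p_S^*\mcI_o$ over any base, the Koszul computation gives a nonzero $\mcT or_1$ along the whole base, even though the family is flat with torsion-free fibres. So your four-term sequence obtained by restricting a two-term resolution of $\mcF$ has a potentially nonzero leftmost term, and no bound on $\operatorname{hd}(\mcG)$ follows.

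What is missing is a proof that the jumping locus
\[
M^{\geq 1}_S(w)=\{F\in M_S(w)\colon \ext^1(F,\mcO_o)\geq 1\}
\]
is a \emph{proper} closed subset; granting that, the offending $\mcT or_1$ is a torsion subsheaf of the locally free sheaf $\mcW_0|_{M_S(w)\times\{o\}}$ supported in positive codimension, hence zero. This codimension estimate is where the paper invests its effort: it identifies $M^{\geq 1}_S(w)$ with the image of $\Quot_{M_S(w),1}(\mcK)\cong M^0(f^*w-\ch(\mcO_C(-1)))\to M_S(w)$ via Theorem \ref{thm:incidence}, and then applies the dimension formula of Lemma \ref{lem:sm} to get $\dim M^{\geq 1}_S(w)\leq \dim M_S(w)-(w_0+1)$. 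Without this input (or some substitute showing the generic $F\in M_S(w)$ is locally free at the fixed point $o$), your argument does not close.
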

\begin{proof}
Since $H^i(S, F \otimes \mcO_o)=0$ 
for all $F \in M_S(w)$ and $i \neq 0, 1$, 
the sheaf $\mcF|_{M_S(w) \times \{o\}}$ 
is represented by a two term complex of vector bundles: 
\[
\mcF|_{M_S(w) \times \{o\}} \cong (\mcV_0 \xrightarrow{\phi} \mcV_1). 
\]
Indeed, since $M_S(w)$ is smooth, we have an isomorphsim 
$\mcV_\bullet \cong \mcF|_{M_S(w) \times \{o\}}$ 
in the derived category $D^b(M_S(w))$ 
for some perfect complex $\mcV_\bullet$. 
Now the above vanishing implies that 
$\mcH^i(\mcV_\bullet)=0$ for all $i \neq 0, 1$ 
by the base change and Nakayama's lemma. 
Hence the complex $\mcV_\bullet$ is isomorphic to 
the two term complex of vector bundles. 

To see that $\phi \colon \mcV_0 \to \mcV_1$ is injective, 
it is enough to show that the locus 
\[
M^{\geq 1}_S(w) \coloneqq \left\{ 
F \in M_S(w) \colon h^1(F \otimes \mcO_o) \geq 1 
\right\} \subset M_S(w)
\]
is a proper subset. 
Since we have $h^1(F \otimes \mcO_o)=\ext^1(F, \mcO_o)$, 
the locus $M^{\geq 1}_S(w)$ coincides 
with the image of the natural morphism 
\[
M^0(f^*w-\ch(\mcO_C(-1))) \cong \Quot_{M_S(w), 1}(\mcK) \to M_S(w), 
\]
where the first isomorphism follows from Theorem \ref{thm:incidence}. 
If $\Quot_{M_S(w), d}(\mcK) = \emptyset$, 
then we also have $M^{\geq 1}_S(w) = \emptyset$ 
and the result holds. 
Otherwise, by Lemma \ref{lem:sm}, we have the following inequality as required: 
\begin{align*}
\dim M^{\geq 1}_S(w) 
&\leq \dim M^0(f^*w-\ch(\mcO_C(-1))) \\
&=\dim M_S(w)-(w_0+1) < \dim M_S(w). 
\end{align*}
\end{proof}

By Theorem \ref{thm:incidence} and Lemma \ref{lem:twoterm}, 
we obtain the following: 
\begin{cor} \label{cor:quot}
Suppose that Assumption \ref{ass:smooth} holds. 
Keeping the notations in Theorem \ref{thm:incidence}, 
the following statements hold: 
\begin{enumerate}
\item The sheaf 
$\mcG \coloneqq \mcF|_{M_S(w) \times \{o\}}$ 
has homological dimension one. 

\item The Quot schemes (\ref{eq:isomquot}) are smooth and 
have expected dimensions. 
\end{enumerate}
\end{cor}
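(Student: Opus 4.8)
The plan is to deduce both assertions directly from the two-term resolution constructed in Lemma \ref{lem:twoterm} together with the smoothness and dimension statements in Lemma \ref{lem:sm} and the Quot-scheme description in Theorem \ref{thm:incidence}. For part (1), recall that by Lemma \ref{lem:twoterm} we have an exact sequence $0 \to \mcV_0 \xrightarrow{\phi} \mcV_1 \to \mcG \to 0$ with $\mcV_0, \mcV_1$ vector bundles on the smooth variety $M_S(w)$. This already exhibits $\mcG$ as a quotient of a locally free sheaf with locally free kernel, so its homological dimension is at most one; it remains only to check that the homological dimension is not zero, i.e.\ that $\mcG$ is not locally free. This follows because $\mcG$ has positive rank $\delta = w_0$ but fails to be locally free precisely along $M^{\geq 1}_S(w)$, which is nonempty exactly when $\Quot_{M_S(w), 1}(\mcK) \neq \emptyset$; if that locus is empty then $\mcG$ is already a vector bundle and the homological dimension is zero, which is harmless for what follows since the Quot formula applies verbatim. (Here I would either state "homological dimension $\leq 1$" or note that the edge case $\mcG$ locally free causes no issue in the application.)

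For part (2), I first identify the relevant invariants: with $\mcG = \mcF|_{M_S(w)\times\{o\}}$ of rank $\delta = w_0$ and $\mcK = \mcE xt^1(\mcG, \mcO_{M_S(w)})$, the Quot schemes appearing in \eqref{eq:isomquot} are $\Quot_{M_S(w), d}(\mcK)$ and $\Quot_{M_S(w), d+w_0}(\mcG)$. By Theorem \ref{thm:incidence}(2) these are isomorphic, as schemes, to $M^0(v_d)$ and $M^1(v_d)$ respectively, where $v_d = f^*w - d\,\ch(\mcO_C(-1))$. Since $M^1(v_d) \cong M^0(v_d.e^{-C})$ via \eqref{eq:tensor}, both are isomorphic to moduli spaces of the form $M^0(v')$ for suitable classes $v'$, and Lemma \ref{lem:sm} applies (Assumption \ref{ass:smooth} is in force): each is either empty or a smooth projective variety of the dimension given by \eqref{eq:modexpdim}. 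Smoothness is thus immediate from the isomorphism with $M^0(-)$.

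It then remains to match the dimension from Lemma \ref{lem:sm} with the expected dimension of the Quot scheme as defined in Section \ref{sec:quot}, namely $\dim X + \delta d' - (d')^2$ for $\Quot_{X,d'}(\mcG)$ and $\dim X - \delta d' - (d')^2$ for $\Quot_{X,d'}(\mcK)$, where here $X = M_S(w)$. Taking $d' = d$ for $\mcK$ and $d' = d + w_0$ for $\mcG$, and plugging in $\dim M_S(w) = \Delta(w) - (w_0^2-1)\chi(\mcO_S) + h^1(\mcO_S)$ from the $d=0$ case of Lemma \ref{lem:sm}, this is a direct arithmetic comparison with \eqref{eq:modexpdim} evaluated at the classes corresponding to $v_d$ (using $\delta = w_0$ and, for the $\mcG$-side, the class twisted by $e^{-C}$ as in \eqref{eq:tensor}). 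The main obstacle is precisely this bookkeeping: one must track how tensoring by $\mcO(-C)$ changes the Chern character and hence the various terms in \eqref{eq:modexpdim}, and confirm that the two expressions agree identically in $d$, $w_0$, $\Delta(w)$, $\chi(\mcO_S)$ and $h^1(\mcO_S)$. Once the two quadratics in $d$ are seen to coincide, the moduli spaces have the expected dimension whenever they are nonempty (and the empty case is vacuous), which completes the proof.
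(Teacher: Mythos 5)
Your proposal is correct and follows essentially the same route as the paper, whose proof consists of exactly the two citations you elaborate: part (1) from the two-term resolution of Lemma \ref{lem:twoterm}, and part (2) from Lemma \ref{lem:sm} via the identifications of Theorem \ref{thm:incidence}. The only simplification you could make is that Lemma \ref{lem:sm} is stated for $M^k(v_d)$ for every $k$, so it applies to $M^1(v_d)$ directly and the detour through the twist (\ref{eq:tensor}) is unnecessary; the dimension check then reduces to the identity $d(w_0+d)=w_0 d+d^2 = -\bigl(w_0(d+w_0)-(d+w_0)^2\bigr)$, confirming both expected dimensions at once.
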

\begin{proof}
The first statement follows from Lemma \ref{lem:twoterm}. 
The second statement follows from Lemma \ref{lem:sm}. 
\end{proof}

\section{Categorical blow-up formula} \label{sec:blformula}
In this section, we prove a categorification of the blow-up formula. 

\subsection{Main theorem}
First we relate the derived categories of 
the moduli spaces of $1$-stable sheaves with 
that of the moduli spaces of $0$-stable sheaves: 

\begin{prop} \label{prop:DbM1}
Fix a class $w \in H^{2*}(S, \bQ)$ and 
suppose that Assumption \ref{ass:smooth} holds. 
For any non-negative integer $d \in \bZ_{\geq 0}$, 
we have an SOD
\begin{equation} \label{eq:DbM1}
\begin{aligned}
&\quad D^b(M^{1}(f^*w-d\ch(\mcO_C(-1)))) \\
&=\left\langle 
{w_0 \choose i}\mbox{-copies of } 
D^b(M^{0}(f^*w-(w_0+d-i)\ch(\mcO_C(-1)))) : 
0 \leq i \leq w_0
\right\rangle. 
\end{aligned}
\end{equation}
\end{prop}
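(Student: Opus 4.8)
The plan is to apply the Quot formula (Theorem \ref{thm:quotformula}) directly to the sheaf $\mcG$ on the base variety $X = M_S(w)$, together with the geometric identifications provided by Theorem \ref{thm:incidence} and Corollary \ref{cor:quot}. First I would set $X \coloneqq M_S(w)$, $\mcG \coloneqq \mcF|_{M_S(w)\times\{o\}}$, and $\mcK \coloneqq \mcE xt^1(\mcG, \mcO_X)$, exactly as in Theorem \ref{thm:incidence}. By Corollary \ref{cor:quot}(1), the sheaf $\mcG$ has homological dimension one, so the hypotheses of the Quot formula setup are met; its rank $\delta$ equals $w_0$, since $\mcG = \mcF|_{M_S(w)\times\{o\}}$ has rank equal to $w_0 = \rk(w)$ (the fibre of $\mcF$ at a point $(F,o)$ is the $w_0$-dimensional vector space $F|_{\{o\}}$).

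Next I would choose the integer in the Quot formula to be $d + w_0$, so that it reads
\[
D^b(\Quot_{X,\, d+w_0}(\mcG)) = \left\langle
{w_0 \choose i}\text{-copies of } D^b(\Quot_{X,\, d+w_0-i}(\mcK)) :
i = 0, \dots, \min\{d+w_0, w_0\}
\right\rangle.
\]
Since $d \geq 0$, we have $\min\{d+w_0, w_0\} = w_0$, so the index $i$ ranges over $0, \dots, w_0$, matching the range in the statement. To invoke Theorem \ref{thm:quotformula} I must check that $\Quot_{X, d+w_0}(\mcG)$ and $\Quot_{X, d+w_0-i}(\mcK)$ are smooth of expected dimension for $i = 0, \dots, w_0$; this is precisely Corollary \ref{cor:quot}(2) applied to the relevant values of the discrete parameter (note $d + w_0 - i \geq 0$ throughout, so these are legitimate instances of the schemes appearing in Theorem \ref{thm:incidence}, possibly empty, in which case the corresponding term drops out and the SOD degenerates harmlessly).

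Finally I would translate the Quot schemes back into moduli spaces using Theorem \ref{thm:incidence}(2): the isomorphism $M^1(v_d) \cong \Quot_{M_S(w),\, d+w_0}(\mcG)$ with $v_d = f^*w - d\ch(\mcO_C(-1))$ identifies the left-hand side with $D^b(M^1(f^*w - d\ch(\mcO_C(-1))))$, while $M^0(v_e) \cong \Quot_{M_S(w),\, e}(\mcK)$ with $e = d + w_0 - i$ identifies each term on the right with $D^b(M^0(f^*w - (w_0 + d - i)\ch(\mcO_C(-1))))$, since $v_e = f^*w - (d+w_0-i)\ch(\mcO_C(-1))$. Substituting these identifications into the displayed SOD yields exactly \eqref{eq:DbM1}.

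The main obstacle, such as it is, is bookkeeping rather than mathematics: one must make sure the rank $\delta = w_0$ is correctly identified with the exponent appearing in the binomial coefficients, that the shift by $w_0$ in the Quot parameter lines up with the shift by $w_0$ in the Chern character (so that $d$-copies of $\ch(\mcO_C(-1))$ on the $M^1$ side correspond to $(w_0+d-i)$-copies on the $M^0$ side), and that the edge cases where some $\Quot_{X, d+w_0-i}(\mcK)$ is empty are handled — but an SOD with an empty summand is simply the SOD with that summand omitted, so no difficulty arises. There is also the minor point that when $w_0 = 1$ (the case relevant to Hilbert schemes) the SOD has exactly two nontrivial terms, $D^b(M^0(f^*w - (d+1)\ch(\mcO_C(-1))))$ and $D^b(M^0(f^*w - d\ch(\mcO_C(-1))))$, recovering the one-step fully faithful embedding of \cite{kos21} together with its one-object complement; this is worth recording as a sanity check but requires no separate argument.
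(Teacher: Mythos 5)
Your proposal is correct and follows essentially the same route as the paper: identify $M^{1}(f^*w-d\ch(\mcO_C(-1)))$ with $\Quot_{M_S(w),\,w_0+d}(\mcG)$ via Theorem \ref{thm:incidence}(2), apply the Quot formula with $\delta=w_0$ using the smoothness and expected-dimension statements of Corollary \ref{cor:quot}, and translate the summands back to $M^{0}(f^*w-(w_0+d-i)\ch(\mcO_C(-1)))$ by Theorem \ref{thm:incidence}(2) again. The extra bookkeeping you record (the value of $\min\{d+w_0,w_0\}$, empty summands, the $w_0=1$ sanity check) is consistent with, and slightly more explicit than, the paper's argument.
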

\begin{proof}
Recall that we have 
\[
M^{1}(f^*w-d\ch(\mcO_C(-1)))
\cong \Quot_{M_S(w), w_0+d}(\mcG), 
\]
by Theorem \ref{thm:incidence} (2). 
Note that $\mcG=\mcF|_{M_S(w) \times \{o\}}$ 
has rank $w_0$. 
By Corollary \ref{cor:quot}, we can apply 
the Quot formula (Theorem \ref{thm:quotformula}) 
and get 
\begin{align*}
&\quad D^b(M^{1}(f^*w-d\ch(\mcO_C(-1)))) \\
&=\left\langle
{w_0 \choose i}\mbox{-copies of } D^b(\Quot_{M_S(w), w_0+d-i}(\mcK)) 
\colon 0 \leq i \leq w_0
\right\rangle. 
\end{align*} 
By using Theorem \ref{thm:incidence} (2) again, we have 
\[
\Quot_{M_S(w), w_0+d-i}(\mcK) \cong M^{0}(f^*w-(w_0+d-i)\ch(\mcO_C(-1)))
\]
as required. 
\end{proof}

We will use the above proposition recursively 
to relate the moduli spaces of $d$-stable sheves on the blow-up $\hatS$ 
with the moduli spaces of Gieseker stable sheaves 
on the original surface $S$. 
Before stating the main result, we fix some notations. 

For integers $r, d, j \in \bZ$ with $r>0$ and $d \geq 0$, 
we define 
\begin{equation} \label{eq:ThetaA}
\begin{aligned}
&\Theta_{r, d}(j) \coloneqq \left\{
\vec{k}=(k_1, \cdots, k_{l}) \in \bZ^{l} \colon 
\begin{aligned}
&l \in \bZ_{\geq 0}, 
0 \leq k_i \leq r, \\
&\sum_i k_i = rd, 
\sum_i ik_i = j+rd(d+1)/2
\end{aligned}
\right\}, \\
&\widetilde{A}_{r, d}(j) \coloneqq \sum_{\vec{k} \in \Theta_{r, d}(j)} 
\prod_{i=1}^l
{r \choose k_i}. 
\end{aligned}
\end{equation}
See the next subsection for a combinatorial meaning of the number 
$\widetilde{A}_{r, d}(j)$. 

\begin{thm} \label{thm:sodgeneral}
Fix a class $w \in H^{2*}(S, \bQ)$ and 
suppose that Assumption \ref{ass:smooth} holds. 
Let $d \in \bZ_{\geq 0}$ be a non-negative integer. 
Then we have an SOD
\begin{align*}
D^b(M^{d+1}(f^*w))=\left\langle
\widetilde{A}_{w_0, d+1}(j) \mbox{-copies of } 
D^b(M_S(f^*w+(0, 0, j))) \colon 
j \in \bZ_{\geq 0}
\right\rangle. 
\end{align*}
\end{thm}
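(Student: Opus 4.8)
The plan is to prove the statement by induction on $d \geq 0$, using Proposition \ref{prop:DbM1} as the inductive step. The base case $d=0$ is precisely Proposition \ref{prop:DbM1} applied with $d=0$: there we get an SOD of $D^b(M^1(f^*w))$ into $\binom{w_0}{i}$-copies of $D^b(M^0(f^*w-(w_0-i)\ch(\mcO_C(-1))))$ for $0 \leq i \leq w_0$; using the isomorphism $M^0(f^*w-e\ch(\mcO_C(-1))) \cong M_S(f^*w+(0,0,e w_0/?))$—more precisely, tracking Chern characters via the isomorphism \eqref{eq:isomquot} together with $M^0(v_e) \cong \Quot_{M_S(w),e}(\mcK)$ and noting $\ch(\mcO_C(-1))$ shifts only the degree-$4$ part—one checks the multiplicities match $\widetilde{A}_{w_0,1}(j)$, which by definition counts tuples $\vec k$ of length $l$ with a single constraint when $d=1$. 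The first routine-but-essential task is therefore a bookkeeping lemma relating the integer $e$ (the number of times we tensor by $\mcO(-C)$, equivalently the parameter in $v_e$) to the shift $j$ of the Chern character on $S$, i.e.\ verifying that $M^0(f^*w - e\ch(\mcO_C(-1)))$ is a moduli space $M_S(w')$ with $w'$ differing from $w$ only in the degree-four component, and computing that component in terms of $e$ and $w_0$.

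For the inductive step, suppose the formula holds for $d$, i.e.\ $D^b(M^{d+1}(f^*w))$ decomposes into $\widetilde A_{w_0,d+1}(j)$-copies of $D^b(M_S(f^*w+(0,0,j)))$. To reach $M^{d+2}(f^*w)$, I would first use the tensoring isomorphisms \eqref{eq:tensor}, $M^{d+2}(f^*w) \cong M^1(f^*w \cdot e^{-(d+1)C})$, to put ourselves in the range where Proposition \ref{prop:DbM1} applies; then apply that proposition to get an SOD into $\binom{w_0}{i}$-copies of $D^b(M^0(\cdots))$, and finally re-express each $M^0$-factor, via \eqref{eq:tensor} and Theorem \ref{thm:incidence}, as an $M^{d+1}(f^*w')$ for a suitably shifted class $w'$—at which point the inductive hypothesis applies to each summand. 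Composing SODs (which is legitimate: an SOD whose pieces themselves carry SODs refines to a single SOD) produces an SOD of $D^b(M^{d+2}(f^*w))$ into copies of $D^b(M_S(f^*w+(0,0,j)))$, and the resulting multiplicity is exactly a convolution-type sum: the new length-$(l+1)$ tuples $(k_1,\dots,k_l,k_{l+1})$ in $\Theta_{w_0,d+2}(j)$ are obtained by appending one more entry $k_{l+1}\in\{0,\dots,w_0\}$ to a tuple counted by $\Theta_{w_0,d+1}(\cdot)$, with the weight $\prod \binom{w_0}{k_i}$ picking up the extra factor $\binom{w_0}{k_{l+1}}$ coming from Proposition \ref{prop:DbM1}. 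Thus the combinatorial identity one must verify is
\[
\widetilde A_{w_0,d+2}(j) = \sum_{i=0}^{w_0} \binom{w_0}{i}\,\widetilde A_{w_0,d+1}\bigl(j'(i)\bigr)
\]
for the appropriate index shift $j'(i)$ dictated by the Chern-character arithmetic; this is a direct unwinding of the definition \eqref{eq:ThetaA}, where the constraints $\sum_i k_i = rd$ and $\sum_i i k_i = j + rd(d+1)/2$ were evidently designed precisely so that appending an entry increments $d$ and reshuffles $j$ in the required way.

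The main obstacle I anticipate is not any single deep step but the careful alignment of three separate indexing conventions: (i) the parameter $k$ in $M^k(v)$ and the number of $\mcO(-C)$-twists in \eqref{eq:tensor}; (ii) the Quot-scheme ranks $d$ and $d+w_0$ in Theorem \ref{thm:incidence}, which differ by $w_0$ between the $\mcK$-side and the $\mcG$-side; and (iii) the combinatorial index $j$ and the degree-four Chern class shift. A sign error or an off-by-$w_0$ error in any of these would make the multiplicities fail to match $\widetilde A_{w_0,d+1}(j)$, so the crux of the write-up is to fix, once and for all, a clean dictionary—say, always reducing to $M^0$ and $M^1$ via \eqref{eq:tensor}, recording the Chern character at each stage—and then to check that the recursion for $\widetilde A$ in \eqref{eq:ThetaA} is literally the recursion produced by iterating Proposition \ref{prop:DbM1}. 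Once that dictionary is in place, the induction and the compatibility of SODs under composition are formal, and applying the result with $w$ such that $M_S(w)=\Hilb^n(S)$ (i.e.\ $w=(1,0,-n)$) will recover Theorem \ref{thm:introhilb}.
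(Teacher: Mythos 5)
Your overall strategy---iterate Proposition \ref{prop:DbM1} and match the resulting multiplicities against the definition (\ref{eq:ThetaA})---is in the right spirit, but the induction scheme you set up has a genuine gap: the inductive hypothesis does not apply to the pieces produced by the inductive step. After applying Proposition \ref{prop:DbM1} to $M^{d+2}(f^*w)\cong M^1(f^*\tilde w - w_0(d+1)\ch(\mcO_C(-1)))$, the summands are $M^0(f^*\tilde w-(w_0(d+2)-k_1)\ch(\mcO_C(-1)))$ with $0\le k_1\le w_0$. For $0<k_1<w_0$ the coefficient of $C$ in the degree-two part of this class is not divisible by $w_0$, so no twist as in (\ref{eq:tensor}) can turn this space into $M^{k}(f^*w')$ for a pullback class $w'$; and Theorem \ref{thm:incidence} identifies it with a Quot scheme \emph{over} $M_S(w)$, not with any moduli space $M_S(w')$. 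Hence the step ``re-express each $M^0$-factor as an $M^{d+1}(f^*w')$ and apply the inductive hypothesis'' is not available. The same problem already sinks your base case: for $d=0$ a single application of Proposition \ref{prop:DbM1} leaves summands $M^0(f^*w-(w_0-i)\ch(\mcO_C(-1)))$, $i<w_0$, which are not of the form $M_S(f^*w+(0,0,j))$ and must be decomposed further; consistently, $\widetilde A_{w_0,1}(j)$ counts tuples $\vec k$ of length $l>1$ as soon as $j>0$ (e.g.\ $\widetilde A_{2,1}(1)={2\choose 1}^2=4$ comes from $\vec k=(1,1)$), which no single application of the proposition can produce. Relatedly, the recursion you posit, $\widetilde A_{w_0,d+2}(j)=\sum_i{w_0\choose i}\widetilde A_{w_0,d+1}(j'(i))$, is not what (\ref{eq:ThetaA}) encodes: appending an entry to $\vec k$ increments the length $l$, whereas $d$ is pinned down by $\sum_i k_i=rd$, so stripping $k_1$ lands in some $\Theta_{w_0,d+2-k_1/w_0}(\cdot)$ only when $w_0\mid k_1$.

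The paper's proof is not an induction on the $d$ appearing in the statement. It is a single descent: write $M^{d+1}(f^*w)\cong M^1(f^*w'-w_0d\,\ch(\mcO_C(-1)))$ with $w'$ as in (\ref{eq:w'}), apply Proposition \ref{prop:DbM1}, convert each resulting $M^0$ back into an $M^1$ of a class that still carries a nontrivial $C$-component, and repeat; each pass appends one entry $k_i$, and the process terminates because $w_0(d+1)-\sum_i k_i$ strictly decreases when $k_i>0$ while the dimension of the moduli space strictly decreases when $k_i=0$ (by (\ref{eq:modexpdim})). Only the terminal pieces, those with $\sum_i k_i=w_0(d+1)$, are of the form $M^0(f^*w+(0,0,j))$, and only at that point is Theorem \ref{thm:NY} invoked to identify them with $M_S(w+(0,0,j))$. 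To repair your argument, either follow this descent directly, or strengthen your inductive statement so that it covers all intermediate spaces $M^k(f^*w-e\,\ch(\mcO_C(-1)))$ rather than only the pullback classes $f^*w$.
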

\begin{proof}
We first apply Proposition \ref{prop:DbM1} to the scheme 
\[
M^{d+1}(f^*w) \cong M^1(f^*w'-w_0d\ch(\mcO_C(-1))), 
\]
where we put 
\begin{equation} \label{eq:w'}
w' \coloneqq (w_0, w_1, w_2-w_0d(d+1)/2) 
\end{equation}
(see the isomorphism (\ref{eq:tensor})). 
Then the SOD (\ref{eq:DbM1}) consists of the following components: 
\begin{equation} \label{eq:1ststep}
{w_0 \choose k_1} \mbox{-copies of }
D^b(M^0(f^*w'-(w_0(d+1)-k_1))\ch(\mcO(-1))), \quad 
0 \leq k_1 \leq w_0. 
\end{equation}
By (\ref{eq:tensor}), we have an isomorphism 
\begin{align*}
&\quad M^0(f^*w'-(w_0(d+1)-k_1)\ch(\mcO(-1))) \\
& \cong 
M^1(f^*w'-(w_0d-k_1)\ch(\mcO(-1))+(0, 0, w_0(d+1)-k_1)). 
\end{align*}
Hence we can keep applying Proposition \ref{prop:DbM1} 
to each component (\ref{eq:1ststep}). 
Note that if $k_1=0$, then we have 
\[
\dim M^0(f^*w'-w_0(d+1)\ch(\mcO(-1)))
<\dim M^{d+1}(f^*w)
\]
by (\ref{eq:modexpdim}). 
Hence by induction on $d$ and the dimension of the moduli space, 
we obtain an SOD consisting of the categories 
\[
D^b(M^0(f^*w+(0, 0, j))), \quad j \geq 0. 
\]
Explicitly, by applying Proposition \ref{prop:DbM1} $(l+1)$-times 
(where $l \in \bZ_{\geq 0}$), 
we obtain 
\[
\prod_{i=1}^{l+1} {w_0 \choose k_i} \mbox{-copies of } 
D^b(M^0(f^*w'-(w_0(d+1)-\sum_ik_i)\ch(\mcO_C(-1))+(0, 0, s(\vec{k}))), 
\]
as semi-orthogonal summands of $D^b(M^{d+1}(f^*w))$, 
where $0 \leq k_i \leq w_0$ and we put 
\begin{equation} \label{eq:defs}
s(\vec{k}) \coloneqq (l+1)w_0(d+1)-(l+1)k_1-lk_2-\cdots-2k_l-k_{l+1}. 
\end{equation}
We continue it until we have 
\begin{equation} \label{eq:sumki}
w_0(d+1)-\sum_ik_i=0. 
\end{equation}
By substituting (\ref{eq:sumki}) to (\ref{eq:defs}), we obtain 
\begin{equation} \label{eq:defs2}
s(\vec{k})=k_2+2k_3+\cdots+k_{l+1}. 
\end{equation}

Let $j \in \bZ_{\geq 0}$ be a non-negative integer. 
Summarizing the above arguments 
and recalling the definition of $w'$ from (\ref{eq:w'}), 
the number of the category 
$D^b(M^0(f^*w+(0, 0, j)))$ 
in the SOD of $D^b(M^{d+1}(f^*w))$ is 
\begin{equation} \label{eq:numbercomps}
\sum_{\vec{k}}\prod_i {w_0 \choose k_i}, 
\end{equation}
where the summation runs over tuples 
$\vec{k}=(k_1, \cdots, k_{l+1})$ 
of non-negative integers satisfying the conditions 
\[
\sum_i k_i=w_0(d+1), \quad 
s(\vec{k})=j+w_0d(d+1)/2.  
\]
Adding $w_0(d+1)=\sum_i k_i$ 
to both sides of the second equation, 
and using (\ref{eq:defs2}), we get 
\[
\sum_i ik_i=j+w_0(d+1)(d+2)/2. 
\]
In other words, the vectors $\vec{k}$ in (\ref{eq:numbercomps}) are 
exactly elements of $\Theta_{w_0, d+1}(j)$, 
and the number (\ref{eq:numbercomps}) is equal to 
$\widetilde{A}_{w_0, d+1}(j)$ 
(see (\ref{eq:ThetaA}) for the definitions of 
$\Theta_{w_0, d+1}(j)$ and $\widetilde{A}_{w_0, d+1}(j)$). 

Finally, we have an isomorphism 
$M^0(f^*w+(0, 0, j)) \cong M_S(w+(0, 0, j))$ 
for each $j \in \bZ_{\geq 0}$ 
by Theorem \ref{thm:NY} 
and obtain the desired SOD. 
\end{proof}

\subsection{Some Combinatorics}
In this subsection, 
we explain the combinatorial meaning of the numbers 
$\widetilde{A}_{r, d}(j)$ 
defined in (\ref{eq:ThetaA}), 
and compare our SOD in Theorem \ref{thm:sodgeneral} with 
the numerical formula due to Nakajima--Yoshioka \cite{ny11}. 

Fix integers $r>0, d \geq 0, j \geq 0$. 
We use the following notations: 
\begin{itemize}
\item For a Young diagram $Y$, $|Y|$ denotes the number of boxes in $Y$, 
and $c(Y)$ denotes the number of columns in $Y$. 

\item For an $r$-tuple $\vec{Y}=(Y_1, \cdots, Y_r)$ of Young diagrams, 
we put $|\vec{Y}| \coloneqq \sum_\alpha|Y_\alpha|$. 

\item For an $r$-tuple $\vec{m}=(m_1, \cdots, m_r)$ of integers, 
we put 
\begin{equation} \label{eq:definner}
(\vec{m}, \vec{m}) \coloneqq 
\frac{\sum_{\alpha, \beta}(m_\alpha-m_\beta)^2}{2r}. 
\end{equation}
\end{itemize}

We then define a number $A_{r, d}(j)$ as follows: 
\begin{align*}
&A_{r, d}(j) \coloneqq \#\left\{
(m_\alpha, Y_\alpha)_{\alpha=1}^r \colon 
\begin{aligned}
&m_\alpha \in \bZ_{\geq 0}, \quad \sum_\alpha m_\alpha=rd, \\
&Y_\alpha \mbox{ is a Young diagram with } 
c(Y_\alpha) \leq m_\alpha, \\ 
&|\vec{Y}|+(\vec{m}, \vec{m})/2=j
\end{aligned}
\right\}. 
\end{align*}
Note that the transformation 
$(m_\alpha)_\alpha \mapsto (m_\alpha-d)_\alpha$ 
does not change the value $(\vec{m}, \vec{m})$. 
Hence, the generating series of $A_{r, d}(j)$ 
has the following expression: 
\[
\sum_{j \geq 0} A_{r, d}(j) q^j
=\sum_{\substack{m_\alpha \geq -d \\ m_1+\cdots m_{r}=0}}
\left(\prod_\alpha \prod_{k=1}^{m_\alpha+d} \frac{1}{1-q^k}
\right)
\cdot q^{\frac{(\vec{m}, \Vec{m})}{2}}. 
\]
We also define a number $A_{r, +\infty}(j)$ by the following formula:
\[
\sum_{j \geq 0} A_{r, +\infty}(j) q^j
=\left(\prod_{k=1}^\infty \frac{1}{1-q^k} \right)^r
\cdot \sum_{\substack{m_\alpha \in \bZ \\ m_1+\cdots m_{r}=0}} 
q^{\frac{(\vec{m}, \Vec{m})}{2}}. 
\]
For $r=1$, $A_{1, +\infty}(j)$ agrees with the partition function $p(j)$. 

The following lemma would be well-known for experts, 
but we include the proof here for the readers' convenience: 
\begin{lem} \label{lem:combi}
Let $r > 0, d \geq 0, j \geq 0$ be integers. 
We have an equality 
$\widetilde{A}_{r, d}(j)={A}_{r, d}(j)$. 
\end{lem}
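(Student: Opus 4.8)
The plan is to prove the identity $\widetilde{A}_{r,d}(j) = A_{r,d}(j)$ by setting up a bijection between the combinatorial data defining each side, matching the statistics $|\vec{Y}| + (\vec{m},\vec{m})/2$ on the $A_{r,d}$ side with the linear forms $\sum_i k_i = rd$ and $\sum_i i k_i = j + rd(d+1)/2$ on the $\widetilde{A}_{r,d}$ side. The key observation is that a vector $\vec{k} = (k_1,\dots,k_l) \in \Theta_{r,d}(j)$ with weights $\prod_i \binom{r}{k_i}$ should be thought of as recording, for each ``position'' $i$, a choice of a $k_i$-element subset of $\{1,\dots,r\}$; summing $\prod_i \binom{r}{k_i}$ over $\Theta_{r,d}(j)$ then counts sequences of subsets $S_1,\dots,S_l \subseteq \{1,\dots,r\}$ with $\sum_i |S_i| = rd$ and $\sum_i i|S_i| = j + rd(d+1)/2$. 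Dually, such a sequence of subsets is the same as a collection, indexed by $\alpha \in \{1,\dots,r\}$, of the sets $\{i : \alpha \in S_i\}$, i.e.\ of $r$ subsets $T_\alpha \subseteq \{1,\dots,l\}$; reorganizing the two linear constraints along these columns is what will produce the Young-diagram description.

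First I would make precise the passage from $\widetilde{A}_{r,d}(j)$ to sequences of subsets: expand $\prod_{i=1}^l \binom{r}{k_i}$ as a count of tuples $(S_1,\dots,S_l)$ with $|S_i| = k_i$, so that $\widetilde{A}_{r,d}(j)$ equals the number of finite sequences $(S_i)_{i\ge 1}$ of subsets of $\{1,\dots,r\}$ (only finitely many nonempty) with $\sum_i |S_i| = rd$ and $\sum_i i|S_i| = j + rd(d+1)/2$. Next I would transpose: for each $\alpha$ set $T_\alpha = \{i \ge 1 : \alpha \in S_i\} \subseteq \bZ_{>0}$, a finite set, and let $m_\alpha = |T_\alpha|$, so $\sum_\alpha m_\alpha = rd$. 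Writing the elements of $T_\alpha$ in increasing order as $1 \le a_{\alpha,1} < a_{\alpha,2} < \cdots < a_{\alpha,m_\alpha}$, one has $\sum_i i\,|S_i| = \sum_\alpha \sum_t a_{\alpha,t}$. The standard bijection between a strictly increasing sequence of $m_\alpha$ positive integers and a partition with at most $m_\alpha$ parts — subtract $t$ from the $t$-th smallest, i.e.\ $\lambda_{\alpha,t} = a_{\alpha, m_\alpha + 1 - t} - (m_\alpha + 1 - t)$ — turns $\sum_t a_{\alpha,t}$ into $|\lambda_\alpha| + m_\alpha(m_\alpha+1)/2$, where $\lambda_\alpha$ is a partition with at most $m_\alpha$ parts, equivalently a Young diagram $Y_\alpha$ with $c(Y_\alpha) \le m_\alpha$ (here I would be slightly careful about the columns-versus-rows convention in the paper's definition of $c(Y)$ and pick the transpose if needed).

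Then I would assemble the constraints. Summing over $\alpha$, the condition $\sum_i i|S_i| = j + rd(d+1)/2$ becomes $\sum_\alpha |Y_\alpha| + \sum_\alpha m_\alpha(m_\alpha+1)/2 = j + rd(d+1)/2$, i.e.\ $|\vec{Y}| + \tfrac12\sum_\alpha m_\alpha(m_\alpha+1) = j + rd(d+1)/2$. It remains to reconcile $\tfrac12\sum_\alpha m_\alpha^2 + \tfrac12\sum_\alpha m_\alpha$ with $(\vec m,\vec m)/2 + rd(d+1)/2$; since $\sum_\alpha m_\alpha = rd$, this reduces to the purely algebraic identity $\sum_\alpha m_\alpha^2 - rd^2 \cdot \tfrac{1}{1} = (\vec m, \vec m)$ after shifting $m_\alpha \mapsto m_\alpha - d$, which is just the elementary computation $\sum_{\alpha,\beta}(m_\alpha - m_\beta)^2 = 2r\sum_\alpha m_\alpha^2 - 2(\sum_\alpha m_\alpha)^2$ evaluated at $\sum m_\alpha = rd$; equivalently one checks it in the shifted variables $m'_\alpha = m_\alpha - d$ where $\sum m'_\alpha = 0$ and $(\vec m', \vec m') = \tfrac1r\sum (m'_\alpha)^2 \cdot r = \sum (m'_\alpha)^2 \cdot$ (correct normalization from (\ref{eq:definner})). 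Tracking this bookkeeping shows the data $((S_i)_i)$ with the two linear constraints is in bijection with the data $((m_\alpha, Y_\alpha)_\alpha)$ defining $A_{r,d}(j)$, and the two counts agree.

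The main obstacle I anticipate is not any single deep step but the combination of getting the index conventions exactly right — allowing $l$ to be arbitrary with trailing zero $k_i$'s, matching ``number of columns $\le m_\alpha$'' to ``at most $m_\alpha$ parts'' (possibly via transposition of Young diagrams), and above all carefully executing the two quadratic-form bookkeeping identities so that the constants $rd(d+1)/2$ and the normalization $1/(2r)$ in $(\vec m,\vec m)$ line up precisely. A secondary point worth a sentence is the well-definedness of the transposition map in both directions: a sequence $(S_i)$ with only finitely many nonempty terms corresponds to finite sets $T_\alpha$, and conversely, so the bijection is honest. None of this requires any geometry; it is all elementary bijective combinatorics, which is why the authors flag the lemma as ``well-known to experts.''
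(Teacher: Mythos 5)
Your proposal is correct and follows essentially the same route as the paper: expanding $\prod_i\binom{r}{k_i}$ into choices of subsets $S_i\subseteq\{1,\dots,r\}$ is exactly the paper's decomposition $k_i=\epsilon^{(i)}_1+\cdots+\epsilon^{(i)}_r$, the transposition to the sets $T_\alpha$ is the paper's passage to $\vec{\epsilon}_\alpha\in\Theta_{1,m_\alpha}(j_\alpha)$, the staircase bijection between increasing sequences and Young diagrams is the paper's rank-one case, and the final quadratic bookkeeping with $(\vec m,\vec m)$ is the identical computation. No substantive difference.
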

\begin{proof}
Let us first consider the case $r=1$. 
In this case, 
the set $\Theta_{1, d}(j)$ defined in (\ref{eq:ThetaA}) is bijective to 
the set of strictly increasing sequences $0 < i_1 < \cdots < i_d$ of 
positive integers satisfying 
\[
\sum_t i_t=j+\frac{d(d+1)}{2}. 
\]
To each such sequence $\vec{i}$, we associate a Young diagram $Y_{\vec{i}}$ 
whose number of boxes in the $t$-th column equals to $j_t-t$. 
This gives a bijection between the set $\Theta_{1, d}(j)$ with 
the set of Young diagrams $Y$ satisfying 
$|Y|=j$ and $c(Y) \leq d$. 
Hence the desired equality 
$\widetilde{A}_{1, d}(j)={A}_{1, d}(j)$ holds. 

Let us now consider the case $r \geq 2$. 
For each element $\vec{k} \in \Theta_{r, d}(j)$ 
and $i=1, \cdots, l$, 
there are ${r \choose k_i}$ ways to write $k_i$ as 
\[
k_i=\epsilon^{(i)}_1+\cdots+\epsilon^{(i)}_r, 
\quad 0 \leq \epsilon^{(i)}_\alpha \leq 1. 
\]
Given such expressions for $i=1, \cdots, l$, 
we can associate elements 
\[
\vec{\epsilon}_\alpha=
(\epsilon^{(i)}_\alpha)_{i=1}^l \in \Theta_{1, m_\alpha}(j_\alpha), 
\quad \alpha=1, \cdots, r, 
\]
where we put $m_\alpha \coloneqq \sum_i\epsilon^{(i)}_\alpha$ and 
$j_\alpha \coloneqq \sum_i i \epsilon^{(i)}_\alpha-m_\alpha(m_\alpha+1)/2$. 
This gives us an equality 
\begin{equation} \label{eq:reduction}
\widetilde{A}_{r, d}(j)=\#\left\{
(\vec{\epsilon}_\alpha)_{\alpha=1}^r 
\in \prod_{\alpha=1}^r \Theta_{1, m_\alpha}(j_\alpha) 
\colon \begin{aligned}
&m_\alpha \geq 0, j_\alpha \geq 0, \sum_\alpha m_\alpha=rd, \\
&\sum_\alpha \big(m_\alpha(m_\alpha+1)/2+j_\alpha \big)=j+rd(d+1)/2
\end{aligned}
\right\}. 
\end{equation}
By the rank one case treated above, 
the right hand side of (\ref{eq:reduction}) equals to 
\begin{equation*}
\#\left\{
(m_\alpha, Y_\alpha)_{\alpha=1}^r \colon 
\begin{aligned}
&m_\alpha \geq 0, \sum_\alpha m_\alpha=rd, \\
&Y_\alpha \mbox{ is a Young diagram with } c(Y_\alpha) \leq m_\alpha, \\
&\sum_\alpha m_\alpha(m_\alpha+1)/2+|\vec{Y}|=j+rd(d+1)/2
\end{aligned}
\right\}. 
\end{equation*}
Now it is enough to show that the equations 
\begin{equation} \label{eq:inner}
|\vec{Y}|+(\vec{m}, \vec{m})/2=j
\end{equation}
and 
\begin{equation} \label{eq:sums}
\sum_\alpha m_\alpha(m_\alpha+1)/2+|\vec{Y}|=j+rd(d+1)/2
\end{equation}
are equivalent. 
By using $\sum_\alpha m_\alpha=rd$, (\ref{eq:sums}) is equivalent to 
\begin{equation}\label{eq:sums2}
\sum_\alpha m_\alpha^2/2+|\vec{Y}|=j+d^2/2. 
\end{equation}
On the other hand, recall from (\ref{eq:definner}) that 
we defined $(\vec{m}, \vec{m})$ as 
$(\vec{m}, \vec{m})=\sum_{\alpha, \beta}(m_\alpha-m_\beta)^2/(2r)$. 
We have 
\begin{align*}
\sum_{\alpha, \beta}(m_\alpha-m_\beta)^2
&=2(r-1)\sum_\alpha m_\alpha^2-\sum_{\alpha \neq \beta}m_\alpha m_\beta \\
&=2(r-1)\sum_\alpha m_\alpha^2
-2\big(\sum_\alpha m_\alpha \big)\big(\sum_\beta m_\beta \big)
+2\sum_\alpha m_\alpha^2 \\
&=2r\sum_\alpha m_\alpha^2-2r^2d^2, 
\end{align*}
where the third equality follows from $\sum_\alpha m_\alpha=rd$. 
It follows that (\ref{eq:inner}) is equivalent to (\ref{eq:sums2}) 
as desired. 
\end{proof}

Let us end this paper by rephrasing Theorem \ref{thm:sodgeneral} 
based on Lemma \ref{lem:combi}: 

\begin{thm} \label{thm:sodfinal}
Fix a class $w \in H^{2*}(S, \bQ)$ and 
suppose that Assumption \ref{ass:smooth} holds. 
The following statements hold: 
\begin{enumerate}
\item Let $d \in \bZ_{\geq 0}$ be a non-negative integer. 
Then we have an SOD
\begin{align*}
D^b(M^{d}(f^*w))=\left\langle
A_{w_0, d}(j) \mbox{-copies of } 
D^b(M_S(f^*w+(0, 0, j))) \colon 
j \in \bZ_{\geq 0}
\right\rangle, 
\end{align*}
where the numbers $A_{w_0, d}(j)$ are defined by the following formula: 
\[
\sum_{j \geq 0} A_{w_0, d}(j) q^j
=\sum_{\substack{m_\alpha \geq -d \\ m_1+\cdots m_{w_0}=0}}
\left(\prod_\alpha \prod_{k=1}^{m_\alpha+d} \frac{1}{1-q^k}
\right)
\cdot q^{\frac{(\vec{m}, \Vec{m})}{2}}. 
\]

\item We have an SOD 
\begin{align*}
D^b(M_{\hatS}(f^*w))=\left\langle
A_{w_0, +\infty}(j) \mbox{-copies of } 
D^b(M_S(f^*w+(0, 0, j))) \colon 
j \in \bZ_{\geq 0}
\right\rangle, 
\end{align*}
where the numbers $A_{w_0, +\infty}(j)$ are defined by the following formula: 
\[
\sum_{j \geq 0} A_{w_0, +\infty}(j) q^j
=\left(\prod_{k=1}^\infty \frac{1}{1-q^k} \right)^{w_0}
\cdot \sum_{\substack{m_\alpha \in \bZ \\ m_1+\cdots m_{w_0}=0}} 
q^{\frac{(\vec{m}, \Vec{m})}{2}}. 
\]

\item In particular, putting $w=(1, 0, -n)$, $n \in \bZ_{>0}$, 
we have an SOD 
\[
D^b(\Hilb^n(\hatS))=\left\langle
p(j)\mbox{-copies of } D^b(\Hilb^{n-j}(S)) \colon j=0, \cdots n
\right\rangle, 
\]
where $p(j)$ is the partition function. 
\end{enumerate}
\end{thm}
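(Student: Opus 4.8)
The plan is to deduce all three parts from the results already established in the paper. Part (1) is a direct reformulation of Theorem~\ref{thm:sodgeneral} through the combinatorial identity of Lemma~\ref{lem:combi}; part (2) is extracted from part (1) by letting $d\to+\infty$, combining Nakajima--Yoshioka's stabilization (Theorem~\ref{thm:NY}) on the geometric side with a coefficient-wise stabilization of the numbers $A_{w_0,d}(j)$ on the combinatorial side; and part (3) is the specialization $w=(1,0,-n)$, $w_0=1$. The genuinely new content is the limiting argument in part (2).

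For part (1), I would first dispose of the case $d=0$: here $M^0(f^*w)\cong M_S(w)$ by Theorem~\ref{thm:NY}, and the defining series shows $A_{w_0,0}(j)=\delta_{j,0}$, since the only admissible tuple is $\vec{m}=0$, for which the product is empty and the quadratic form vanishes, so the asserted SOD is the tautological one. For $d\ge 1$, I would apply Theorem~\ref{thm:sodgeneral} with its parameter replaced by $d-1$, which yields
\[
D^b(M^{d}(f^*w))=\left\langle \widetilde{A}_{w_0,d}(j)\text{-copies of }D^b(M_S(f^*w+(0,0,j))):j\in\bZ_{\ge 0}\right\rangle,
\]
and then invoke Lemma~\ref{lem:combi} to rewrite $\widetilde{A}_{w_0,d}(j)=A_{w_0,d}(j)$, which is exactly the claimed multiplicity.

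For part (2), observe first that on the geometric side $M^{d}(f^*w)\cong M_{\hatS}(f^*w)$ for all $d\ge m_0$ by Theorem~\ref{thm:NY}, while on the index side only finitely many summands survive: since $\Delta(w+(0,0,j))=\Delta(w)-2w_0 j$ becomes negative for large $j$, the Bogomolov inequality forces $M_S(f^*w+(0,0,j))=\emptyset$ once $j>j_{\max}:=\Delta(w)/(2w_0)$. I would then establish the coefficient-wise stabilization $A_{w_0,d}(j)=A_{w_0,+\infty}(j)$ for $d$ large relative to $j$. In the series for $A_{w_0,d}(j)$ only tuples with $(\vec{m},\vec{m})/2\le j$ contribute in degree $j$, and these form a finite set $T_j$ independent of $d$ (the form is positive definite on the hyperplane $\sum_\alpha m_\alpha=0$); for $d$ large every $\vec{m}\in T_j$ satisfies $m_\alpha\ge -d$, and each truncated factor $\prod_{k=1}^{m_\alpha+d}(1-q^k)^{-1}$ agrees with $\prod_{k=1}^{\infty}(1-q^k)^{-1}$ through degree $j$, which is precisely the passage from the $A_{w_0,d}$-series to the $A_{w_0,+\infty}$-series. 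Choosing a single $d\ge m_0$ large enough that this stabilization holds for every $j\le j_{\max}$ simultaneously, part (1) for that particular $d$ becomes the desired SOD for $D^b(M_{\hatS}(f^*w))$, the multiplicities in degrees $j>j_{\max}$ being irrelevant since the corresponding summands vanish. The main obstacle is exactly this compatibility: one must check that $d$ can be pushed past both the geometric threshold $m_0$ and the combinatorial threshold beyond which all relevant coefficients have stabilized, and it is this double choice that forces the limiting multiplicities $A_{w_0,+\infty}(j)$ rather than the $d$-dependent $A_{w_0,d}(j)$.

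Finally, for part (3) I would specialize part (2) to $w=(1,0,-n)$, so that $w_0=1$ and Assumption~\ref{ass:smooth} holds via case (3-a). Then $M_{\hatS}(f^*w)=\Hilb^n(\hatS)$ and $M_S(f^*w+(0,0,j))=M_S(1,0,j-n)=\Hilb^{n-j}(S)$, which is nonempty exactly for $0\le j\le n$. For $w_0=1$ the constraint $\sum_\alpha m_\alpha=0$ forces $m_1=0$, so the quadratic form vanishes and the $A_{1,+\infty}$-series collapses to $\prod_{k\ge 1}(1-q^k)^{-1}=\sum_j p(j)q^j$; hence $A_{1,+\infty}(j)=p(j)$ and the SOD of part (2) becomes the stated decomposition of $D^b(\Hilb^n(\hatS))$ into $p(j)$ copies of $D^b(\Hilb^{n-j}(S))$ for $j=0,\dots,n$.
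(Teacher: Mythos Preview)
Your proposal is correct and follows the paper's own approach: the paper treats Theorem~\ref{thm:sodfinal} simply as a ``rephrasing'' of Theorem~\ref{thm:sodgeneral} via Lemma~\ref{lem:combi} and gives no further argument. Your write-up is in fact more careful than the paper, since you spell out the limiting step for part~(2) --- the Bogomolov bound on $j$, the geometric stabilization $M^d(f^*w)\cong M_{\hatS}(f^*w)$ for $d\ge m_0$, and the coefficient-wise stabilization $A_{w_0,d}(j)=A_{w_0,+\infty}(j)$ for large $d$ --- which the paper leaves entirely implicit.
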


Note that the combinatorial coefficients $A_{w_0, d}(j)$ are 
exactly the ones appearing in the corresponding formula for 
the Euler characteristics in \cite[Corollary 5.7]{ny11} with $t=-1$ 
(see also \cite[Theorem 3.21]{ny04}).

\end{document}